\documentclass[lettersize,onecolumn]{IEEEtran}
\usepackage{amsmath,amsfonts,amsthm,amssymb}
\usepackage{algorithm}
\usepackage{hyperref}
\usepackage{algpseudocode}
\usepackage{array}
\usepackage[caption=false,font=normalsize,labelfont=sf,textfont=sf]{subfig}
\usepackage{textcomp}
\usepackage{url}
\usepackage{verbatim}
\usepackage{graphicx}
\usepackage{cite}
\usepackage{wrapfig}
\usepackage{dsfont}
\usepackage{tikz}
\usepackage{booktabs}
\usetikzlibrary{arrows.meta,calc,positioning}
\newtheorem{theorem}{Theorem}[section]
\newtheorem{proposition}[theorem]{Proposition}
\newtheorem{lemma}[theorem]{Lemma}
\newtheorem{definition}[theorem]{Definition}
\newtheorem{remark}[theorem]{Remark}

\newif\ifCOM
\COMfalse

\newcommand\eqdef{\overset{\mbox{\tiny def}}{=}}
\newcommand{\HH}{\mathbb H}
\newcommand{\FF}{\mathbb F}
\newcommand{\ZZ}{\mathbb Z}
\newcommand{\CC}{\mathcal C}
\newcommand{\DD}{\mathcal D}
\newcommand{\BB}{\mathcal B}
\newcommand{\EE}{\mathcal E}
\newcommand{\MM}{\mathcal M}
\newcommand{\XX}{\mathcal X}
\newcommand{\Tau}{\mathcal T}
\newcommand{\Lift}{\mathcal L}
\newcommand{\ol}[1]{\overline{#1}}
\newcommand{\RePart}{\operatorname{Re}}
\newcommand{\ip}[2]{\left\langle #1,#2\right\rangle}
\newcommand{\Pow}[1]{2^{#1}}
\newcommand{\RR}{\ensuremath{\mathbb{R}}}
\newcommand{\QQ}{\ensuremath{\mathcal{Q}}_8}

\newcommand{\Ecal}{\mathcal{E}}
\newcommand{\Rcal}{\mathcal{R}}
\newcommand{\Xcal}{\mathcal{X}}

\begin{document}

\title{A quaternionic construction behind \\$841$-point kissing arrangement in ${\mathbb R}^{12}$}

\author{Rustem~Takhanov
\thanks{R.~Takhanov is with the Mathematics Department, Nazarbayev University, and Nazarbayev University Research Administration, Astana, Kazakhstan, e-mail: rustem.takhanov@nu.edu.kz.}
}



\maketitle
\begin{abstract}
Recently, a new record kissing arrangement of $841$ points in $\mathbb R^{12}$ was obtained numerically by optimization (Takhanov--Assylbekov--Yun, 2026). The configuration was released as a coordinate file, without a mathematical description of its structure. The purpose of this paper is to provide such a description. The key observation is that the geometry becomes transparent once we regard
$\mathbb R^{12}\cong \mathbb H^3$
as the Cartesian product of three copies of the quaternion algebra.

We first introduce a new $840$-point kissing arrangement with a certain quaternionic structure. It consists of three mutually orthogonal regular $24$-cells, supported on the three quaternionic coordinate factors
$\mathbb H\times\{0\}\times\{0\}$, 
$\{0\}\times\mathbb H\times\{0\}$,
$\{0\}\times\{0\}\times\mathbb H$,
together with two $384$-point families obtained by lifting affine sets of the form
\[
\{(u,v,w)\in (\mathbb F_2^2)^3\mid u+v+w=\eta\},
\]
to quaternionic triples (whose components belong to the binary octahedral group $2O$) and then applying suitable component-wise rotations and weightings.
 A characteristic feature of this construction is a pronounced asymmetry among the three quaternionic factors. For the $816$ vectors obtained after removing the third $24$-cell, most of the squared norm is concentrated in the first two quaternionic coordinates, while the third coordinate carries systematically less mass. Thus, the third four-dimensional factor contains more available space than the first two.

We then show that this $840$-point configuration provides a natural structural model for the numerical $841$-point record. After an appropriate orthogonal alignment and matching of the points, $816$ vectors of the record configuration are seen to be only slightly deformed from the corresponding $816$ vectors of our $840$-point arrangement. By contrast, the $24$ vectors belonging to the third $24$-cell undergo a substantially larger deformation, and one additional distinguished vector appears predominantly in the third quaternionic factor. In this way, the $841$-point arrangement can be understood as a controlled deformation of a highly structured $840$-point quaternionic arrangement: the first $816$ points remain nearly fixed, while the comparatively ``free'' third quaternionic factor reorganizes to accommodate the $841$st point.

Finally, we introduce a notion of the general quaternionic construction in dimensions divisible by $4$, and check that record kissing arrangements in ${\mathbb R}^{4k}$, $k\leq 5$, admit a quaternionic construction. 
\end{abstract}

\begin{IEEEkeywords}
Kissing number, quaternions, binary octahedral group, $24$-cell, deformation of quaternionic construction.
\end{IEEEkeywords}

\section{Introduction}

For a positive integer $d$, the \emph{kissing number} $\tau_d$ is the largest number of nonoverlapping unit balls in $\mathbb{R}^d$ that can simultaneously touch a fixed unit ball. Equivalently, $\tau_d$ is the maximum cardinality of a set $X\subset S^{d-1}$ such that
\[
    \langle x,y\rangle\leq \frac12
    \qquad\text{for all distinct }x,y\in X.
\]
Such a set will be called a \emph{kissing arrangement}. Although the definition is elementary, exact kissing numbers are known in only a few dimensions~\cite{Schütte1952,2003RuMaS,zbMATH03680654,ODLYZKO1979210}, and the problem remains one of the basic sources of difficult and beautiful questions in discrete geometry.

Consequently, lower bounds for $\tau_d$ are of substantial interest. In the dimensions up to $d=9$, the best known lower bounds appear to be strong candidates for the true values. Currently, the main interest here is the intermediate range
$10\leq d\leq 20$. Recent examples include new constructions in dimensions $10$ and $14$ due to Ganzhinov~\cite{GANZHINOV202512}, and the improvements of Cohn and Li in dimensions $17$ through $21$~\cite{cohn2026improvedkissingnumbersseventeen}. The constructions themselves are equally important as numerical values of lower bounds. A transparent construction reveals which geometric and combinatorial mechanisms create room on the sphere, and these mechanisms can then be combined to produce further examples.

This perspective has become particularly relevant with the recent use of artificial intelligence in mathematical search. Multi-agent search systems based on large language models have already produced new lower bounds for kissing numbers. AlphaEvolve, for example, improved the lower bound in dimension $11$ to $593$~\cite{novikov2025alphaevolvecodingagentscientific}. The EinsteinArena's $604$-point record in ${\mathbb R}^{11}$ is a particularly attractive example. This illustrates how machine-generated data can be transformed into a mathematical construction that can be analyzed, communicated, and reused. An approach based on game-theoretic reinforcement learning allowed to improve lower bounds in dimensions $25$ through $31$~\cite{ma2026findingkissingnumbersgametheoretic}.

The purpose of the present paper is to carry out a structural explanation for the recently discovered $841$-point kissing arrangement in $\mathbb{R}^{12}$~\cite{takhanov2026structurekissingarrangementsmathbb}. The status of this result is, in a sense, ambiguous. As a lower bound it is completely concrete: one may simply provide a coordinate file and verify all pairwise inner products. As a mathematical object, however, it has a large complexity: the file is easy to check but difficult to compress into a short conceptual description. Our main observation is that this complexity is largely superficial. The $841$-point arrangement can be understood as a small deformation of a much more simple $840$-point arrangement with a quaternionic structure.

More precisely, we identify
$\mathbb{R}^{12}\cong \mathbb{H}^3$
and construct a new $840$-point kissing arrangement from three pairwise orthogonal regular $24$-cells, one in each quaternionic coordinate factor, together with two $384$-point families defined by ternary affine relations over ${\mathbb F}_2^2$ and lifted to triples whose entries lie in the binary octahedral group $2O$. Suitable quaternionic rotations and unequal coordinate weights produce the required vectors. This $840$-point configuration is distinct from the $1579$ non-isometric $840$-point arrangements obtained from signed Johnson graphs by Takhanov and Yun~\cite{takhanov2026classificationindependentsetssigned}, and it is also different from the Clebsch-graph-based $840$-point constructions used in Takhanov--Assylbekov--Yun~\cite{takhanov2026structurekissingarrangementsmathbb}. The new quaternionic model explains the numerical $841$-point record remarkably well: after one orthogonal alignment and a relabeling, $816$ of the points are only slightly deformed, while the substantial change is concentrated in the remaining $24$ points of the third axial $24$-cell together with the additional $841$st point. The deformation itself has further low-dimensional structure, but we are not interested in that structure here; our concern is the global quaternionic explanation of the configuration.

The paper is organized as follows. Section~\ref{preliminaries} is dedicated to standard facts from theory of quaternions --- the quaternionic realization of the two regular $24$-cells and the binary octahedral group $2O$. Section~\ref{sec:canonical-relations} introduces the sign lifts and ternary affine relations used in the construction. Section~\ref{sec:canonical-arrangement} gives the quaternionic $840$-point kissing arrangement and states its kissing property, whose detailed verification is made in Section~\ref{main-proof} of Appendix. The fact that this arrangement is not isomorphic to previously known $840$-point arrangements in ${\mathbb R}^{12}$ is verified in Section~\ref{sec:distinction-previous-840} of Appendix. Section~\ref{sec:comparison-canonical-840} compares this canonical arrangement with the numerical $841$-point configuration and exhibits the separation between the nearly unchanged $816$-point core and the exceptional $25$-point part. Section~\ref{quaternionic-general} places the construction in a broader framework of quaternionic kissing arrangements and discusses the $24$-cell, $E_8$ examples. Subsection~\ref{barnes} discusses the Barnes--Wall and Cohn--Li configurations. 

\section{Preliminaries: quaternionic realization of the two regular $24$-cells}\label{preliminaries}
This section is dedicated to introduction of well-known definitions of quaternion algebra and classical facts from theory of $24$-cells.

Let $\HH$ be the algebra of real quaternions, with standard basis
$1,\mathbf i,\mathbf j,\mathbf k$, conjugation $q\mapsto\ol q$, and
Euclidean inner product
\[
    \ip{p}{q}=\RePart(p\ol q).
\]

We identify the Euclidean space $\RR^4$ with $\HH$. Under this identification, the Euclidean norm agrees with the
quaternionic norm $|q|=\sqrt{q\overline{q}}$.
The quaternionic norm is multiplicative, i.e. $|pq|=|p|\cdot |q|$.
Consequently, left and right multiplication by a unit quaternion are
orthogonal transformations of $\RR^4$. We also identify $\HH^3$ with $\mathbb R^{12}$.

We denote the quaternion group by $\QQ$, i.e.
\begin{equation}
\QQ
=
\{+1,-1,+\mathbf{i},-\mathbf{i},+\mathbf{j},-\mathbf{j}, +\mathbf{k},-\mathbf{k}\}.
\end{equation}
Let us introduce the following $24$-element subset of the unit sphere in
$\HH$:
\begin{equation}
\CC
=
\QQ
\cup
\left\{
\frac{
\varepsilon_0
+\varepsilon_1\mathbf{i}
+\varepsilon_2\mathbf{j}
+\varepsilon_3\mathbf{k}
}{2}
\mid
\varepsilon_0,\varepsilon_1,\varepsilon_2,\varepsilon_3
\in\{+1,-1\}
\right\}.
\end{equation}
Thus, $\CC$ consists of the eight elements of $\QQ$ and sixteen
half-integral unit quaternions.

Define a second $24$-element set by
\begin{equation}
\DD
=
\left\{
\frac{\varepsilon_r u_r+\varepsilon_s u_s}{\sqrt{2}}
\mid
0\leq r<s\leq3,
\quad
\varepsilon_r,\varepsilon_s\in\{+1,-1\}
\right\},
\end{equation}
where $(u_0,u_1,u_2,u_3)
=
(1,\mathbf{i},\mathbf{j},\mathbf{k})$.
Every element of $\DD$ has exactly two nonzero coordinates, each equal
to $\pm\frac{1}{\sqrt{2}}$.

The set $\CC$ is commonly called the binary tetrahedral group and is denoted by
$2T$ (see, for example, ~\cite{ConwaySmith}).

We first give a direct verification that $\CC$ is multiplicatively
closed. During that we also calculate cosets of $\CC$ w.r.t. subgroup $\QQ$ which will be instrumental in our construction of kissing arrangements.
 
Put
\begin{equation}
    \omega=\frac{1+\mathbf i+\mathbf j+\mathbf k}{2},
    \qquad
    \tau=\frac{1+\mathbf i}{\sqrt2}.
\end{equation}

\begin{lemma}
The set $\CC$ is a subgroup of the group of unit quaternions.
\end{lemma}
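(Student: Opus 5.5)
The plan is to prove closure under multiplication and inversion, using the coset structure with respect to $\QQ$ as the paper suggests. $\CC$ is a finite set of unit quaternions, and for unit quaternions $q^{-1}=\ol q$. Conjugation fixes the real part and negates the imaginary part, so it maps $\QQ$ to itself and the sixteen half-integral elements to themselves. Hence $\CC$ is closed under inverses, and it remains to show that $\CC$ is closed under multiplication. Once that is done, finiteness alone would also give closure under inverses, but the direct argument is immediate.

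First, $\QQ$ is a group: this is the standard multiplication table $\mathbf i^2=\mathbf j^2=\mathbf k^2=\mathbf{ijk}=-1$. Next, with $\omega=\frac{1+\mathbf i+\mathbf j+\mathbf k}{2}$, I will show that the sixteen half-integral elements are exactly $\omega\QQ\cup\omega^2\QQ$, each coset having eight elements. A direct computation gives
\[
\omega^2=\frac{-1+\mathbf i+\mathbf j+\mathbf k}{2}, \qquad \omega^3=-1.
\]
Multiplying $\omega$ on the right by $\pm1,\pm\mathbf i,\pm\mathbf j,\pm\mathbf k$ produces eight half-integral quaternions with sign patterns $(\varepsilon_0,\ldots,\varepsilon_3)$ of a fixed parity of the number of minus signs, and $\omega^2\QQ$ gives the other eight. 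For example, $\omega\mathbf i=\frac{-1+\mathbf i+\mathbf j-\mathbf k}{2}$ has two minus signs, matching the even-parity class of $\omega$. This yields
\[
\CC=\QQ\cup\omega\QQ\cup\omega^2\QQ .
\]

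The key step is that $\omega$ normalizes $\QQ$. Conjugation by $\omega$ cyclically permutes $\mathbf i\to\mathbf j\to\mathbf k\to\mathbf i$; I will verify this by checking $\omega\mathbf i=\mathbf j\omega$, and the other two cases follow by symmetry. Hence $\omega\QQ=\QQ\omega$. A product of two elements of $\CC$ has the form $\omega^a q\,\omega^b q'$. Using the normalization to move $\omega^b$ leftward, this becomes $\omega^{a+b}q''$ with $q''\in\QQ$. Since $\omega^3=-1\in\QQ$, this product lies in $\omega^{(a+b)\bmod 3}\QQ\subseteq\CC$.

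The main obstacle is bookkeeping: matching the coset decomposition exactly to the sixteen sign patterns, using the parity of the number of minus signs. I would do this by an explicit listing of $\omega\QQ$ and by the observation that $\omega^2\QQ=-\ol{\omega}\QQ$ consists of the complementary patterns. Together with the normalization identity, this establishes that $\CC$ is a subgroup of order $24$.
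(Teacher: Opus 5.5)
Your proposal is correct and follows essentially the same route as the paper. Both arguments rest on the decomposition $\CC=\QQ\sqcup\omega\QQ\sqcup\omega^2\QQ$ matched to the two sign-parity classes, on the fact that $\omega$ normalizes $\QQ$, and on $\omega^3=-1\in\QQ$. Your explicit handling of inverses via conjugation, and your identification of $\omega^2\QQ=-\ol{\omega}\QQ$ as the complementary parity class, simply spell out what the paper covers with ``the subgroup generated by'' and ``analogously''.
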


\begin{proof}
Recall that
$\omega
=
\frac{1+\mathbf{i}+\mathbf{j}+\mathbf{k}}{2}$
is a quaternion whose conjugation transform $x\mathbf{i}+y\mathbf{j}+z\mathbf{k}\to \omega(x\mathbf{i}+y\mathbf{j}+z\mathbf{k})\omega^{-1}$ corresponds to a rotation by angle $\frac{2\pi}{3}$ around $\frac{1}{\sqrt{3}}(1,1,1)$. Then, $\omega^2
=
\frac{-1+\mathbf{i}+\mathbf{j}+\mathbf{k}}{2}$.
A direct calculation gives
$\omega^3=-1$. 
By construction,
\[
\omega\mathbf{i}\omega^{-1}=\mathbf{j},
\qquad
\omega\mathbf{j}\omega^{-1}=\mathbf{k},
\qquad
\omega\mathbf{k}\omega^{-1}=\mathbf{i}.
\]
Thus $\omega$ normalizes the quaternion group $\QQ$.
Since $\omega$ normalizes $\QQ$ and
$\omega^3=-1\in\QQ$, 
the union
\[
\QQ\sqcup\omega\QQ\sqcup\omega^2\QQ
\]
is the subgroup generated by $\QQ$ and $\omega$. 
Let us prove that
\begin{equation}
\CC
=
\QQ
\sqcup
\omega\QQ
\sqcup
\omega^2\QQ.
\end{equation}
The first coset, denoted by $\CC_0$, is $\QQ$. We calculate the right coset $\omega\QQ$ directly. First,
$\omega\cdot1
=
\frac{1+\mathbf{i}+\mathbf{j}+\mathbf{k}}{2}$.
Next,
\[
\begin{aligned}
\omega\mathbf{i}
=
\frac{\mathbf{i}+\mathbf{i}^2+\mathbf{j}\mathbf{i}+\mathbf{k}\mathbf{i}}{2}
=
\frac{\mathbf{i}-1-\mathbf{k}+\mathbf{j}}{2}
=
\frac{-1+\mathbf{i}+\mathbf{j}-\mathbf{k}}{2}.
\end{aligned}
\]
Similarly,
\[
\begin{aligned}
\omega\mathbf{j}
=
\frac{\mathbf{j}+\mathbf{i}\mathbf{j}+\mathbf{j}^2+\mathbf{k}\mathbf{j}}{2}
=
\frac{\mathbf{j}+\mathbf{k}-1-\mathbf{i}}{2}=
\frac{-1-\mathbf{i}+\mathbf{j}+\mathbf{k}}{2},
\end{aligned}
\]
and
\[
\begin{aligned}
\omega\mathbf{k}=
\frac{\mathbf{k}+\mathbf{i}\mathbf{k}+\mathbf{j}\mathbf{k}+\mathbf{k}^2}{2}
=
\frac{\mathbf{k}-\mathbf{j}+\mathbf{i}-1}{2}
=
\frac{-1+\mathbf{i}-\mathbf{j}+\mathbf{k}}{2}.
\end{aligned}
\]
Multiplication by the negative elements of $\QQ$ gives the negatives of
these four quaternions. We therefore define
\[
\begin{aligned}
\CC_1
=\omega\QQ
=\Biggl\{\pm&\frac{1+\mathbf{i}+\mathbf{j}+\mathbf{k}}{2},
\ \pm\frac{-1+\mathbf{i}+\mathbf{j}-\mathbf{k}}{2},
\pm\frac{-1-\mathbf{i}+\mathbf{j}+\mathbf{k}}{2},
\ \pm\frac{-1+\mathbf{i}-\mathbf{j}+\mathbf{k}}{2}
\Biggr\}.
\end{aligned}
\]
Equivalently,
\[
\CC_1
=
\left\{
\frac{
\varepsilon_0
+\varepsilon_1\mathbf{i}
+\varepsilon_2\mathbf{j}
+\varepsilon_3\mathbf{k}
}{2}
\mid
\varepsilon_r\in\{+1,-1\},
\quad
\varepsilon_0\varepsilon_1\varepsilon_2\varepsilon_3=1
\right\}.
\]
Analogously,
\[
\CC_2=\omega^2\QQ
=
\left\{
\frac{
\varepsilon_0
+\varepsilon_1\mathbf{i}
+\varepsilon_2\mathbf{j}
+\varepsilon_3\mathbf{k}
}{2}
\mid
\varepsilon_r\in\{+1,-1\},
\quad
\varepsilon_0\varepsilon_1\varepsilon_2\varepsilon_3=-1
\right\}.
\]
Therefore $\CC$ is a
subgroup of the unit quaternions.
\end{proof}

We now express the second $24$-cell as a coset of $\CC$. A union of that coset with $\CC$ gives the binary octahedral group, commonly denoted by $2O$.

\begin{proposition} Recall that $\tau=\frac{1+\mathbf{i}}{\sqrt{2}}$. 
We have
$\DD=\tau \CC=\CC\tau$,
and
$2O=\CC\cup \DD$
is closed under quaternion multiplication. More precisely,
\[
\CC\CC=\CC,
\qquad
\CC\DD=\DD\CC=\DD,
\qquad
\DD\DD=\CC.
\]
In particular, $2O$ is a group of order $48$.
\end{proposition}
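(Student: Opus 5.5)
The plan is to first prove $\DD=\tau\CC$ by a direct coset computation, exactly as in the proof of the preceding lemma, and then derive everything else from the fact that $\tau$ normalizes $\CC$.

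\textbf{Step 1: $\tau\CC=\DD$.} Since left multiplication by a unit quaternion is orthogonal, $\tau\CC$ consists of $24$ distinct unit vectors, and $|\DD|=\binom42\cdot 4=24$. It therefore suffices to show $\tau\CC\subseteq\DD$. For $q\in\QQ$ we get $\tau q=(q+\mathbf i q)/\sqrt2$. As $q$ runs over $\pm1,\pm\mathbf i,\pm\mathbf j,\pm\mathbf k$, this gives the elements $(\pm1\pm\mathbf i)/\sqrt2$ and $(\pm\mathbf j\pm\mathbf k)/\sqrt2$ of $\DD$. For a half-integral $h=(\varepsilon_0+\varepsilon_1\mathbf i+\varepsilon_2\mathbf j+\varepsilon_3\mathbf k)/2$, we have
\[
\mathbf i h=(-\varepsilon_1+\varepsilon_0\mathbf i-\varepsilon_3\mathbf j+\varepsilon_2\mathbf k)/2 .
\]
Hence $\tau h=\tfrac{1}{2\sqrt2}\bigl((\varepsilon_0-\varepsilon_1)+(\varepsilon_1+\varepsilon_0)\mathbf i+(\varepsilon_2-\varepsilon_3)\mathbf j+(\varepsilon_3+\varepsilon_2)\mathbf k\bigr)$. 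Exactly one of $\varepsilon_0\mp\varepsilon_1$ is zero, and the other equals $\pm2$; the same holds for the $\mathbf j,\mathbf k$ pair. So $\tau h$ has exactly two nonzero coordinates, each equal to $\pm1/\sqrt2$, and lies in $\DD$. The same computation with right multiplication gives $\CC\tau=\DD$. Alternatively, one can note that $\DD$ is invariant under $q\mapsto\ol q$ up to sign, that $\ol{\tau}=\tau^{-1}$, and use $\ol{\CC}=\CC$.

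\textbf{Step 2: products.} From $\tau\CC=\CC\tau$ we obtain $\tau\CC\tau^{-1}=\CC$, i.e.\ $\tau$ normalizes $\CC$. Using the lemma that $\CC\CC=\CC$:
\begin{itemize}
\item $\CC\DD=\CC\CC\tau=\CC\tau=\DD$;
\item $\DD\CC=\tau\CC\CC=\DD$;
\item $\DD\DD=\tau\CC\,\CC\tau=\tau\CC\tau=\tau^2\CC$.
\end{itemize}
Since $\tau^2=(1+\mathbf i)^2/2=\mathbf i\in\CC$, we get $\DD\DD=\mathbf i\CC=\CC$.

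\textbf{Step 3: conclusion.} The set $2O=\CC\cup\DD$ is closed under multiplication and contains $1$. Inverses are conjugates, and both $\CC$ and $\DD$ are closed under conjugation, since conjugation only flips the signs of the imaginary coordinates. So $2O$ is a group. The union $\CC\cup\DD$ is disjoint because elements of $\DD$ have exactly two nonzero coordinates, whereas elements of $\CC$ have one or four. Hence $|2O|=48$.

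The only real work is Step 1, the explicit check that $\tau$ maps the sixteen half-integral quaternions into $\DD$; the sign bookkeeping above handles it uniformly.
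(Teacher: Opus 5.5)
Your proof is correct and follows essentially the paper's route. Both use the same explicit computation of $\tau h$ for half-integral $h$ to obtain $\tau\CC=\DD$, and then derive the product rules from $\CC\CC=\CC$, $\tau\CC=\CC\tau$ and $\tau^2=\mathbf i\in\CC$. The one cosmetic difference is how $\tau\CC=\CC\tau$ is obtained. The paper reads it off from the fact that conjugation by $\tau$ acts as a signed permutation of $\mathbf i,\mathbf j,\mathbf k$, whereas you redo the computation for right multiplication and use your explicit counting argument. Your parenthetical alternative via conjugation only gives $\CC\tau^{-1}=\DD$; it needs the extra observation $\tau=\mathbf i\tau^{-1}$ together with $\CC\mathbf i=\CC$ to reach $\CC\tau=\DD$.
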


\begin{proof}
Note that
$\tau^{-1}
=
\frac{1-\mathbf{i}}{\sqrt{2}}$, and
$\tau^2=\mathbf{i}\in \QQ$.
Conjugation by $\tau$ acts on the standard quaternionic basis as
\[
\tau\mathbf{i}\tau^{-1}=\mathbf{i},
\qquad
\tau\mathbf{j}\tau^{-1}=\mathbf{k},
\qquad
\tau\mathbf{k}\tau^{-1}=-\mathbf{j}.
\]
Therefore, conjugation by $\tau$ fixes the real coordinate and acts on
the three imaginary coordinates by a signed permutation.
It follows immediately that conjugation by $\tau$ preserves
$\QQ$.
It also preserves the set
\[
\left\{
\frac{
\varepsilon_0
+\varepsilon_1\mathbf{i}
+\varepsilon_2\mathbf{j}
+\varepsilon_3\mathbf{k}
}{2}
\mid
\varepsilon_r\in\{+1,-1\}
\right\},
\]
because a signed permutation of the imaginary coordinates maps such a
quaternion to another quaternion of the same form. Hence
$\tau \CC\tau^{-1}=\CC$.
Equivalently,
$\tau \CC=\CC\tau$.

We next prove that
$\tau \CC=\DD$.
For the elements of $\QQ$, the products
\[
\tau(\pm1),
\qquad
\tau(\pm\mathbf{i}),
\qquad
\tau(\pm\mathbf{j}),
\qquad
\tau(\pm\mathbf{k})
\]
give the eight elements of $\DD$, the first four supported on the coordinate pairs
$\{1,{\mathbf i}\}$
 and the second four supported on the coordinate pairs $\{{\mathbf j},{\mathbf k}\}$.

Since $|\QQ|=8$, we obtain
\[
\tau\QQ
=
\left\{
\frac{\varepsilon_0+\varepsilon_1\mathbf{i}}{\sqrt{2}}
\mid
\varepsilon_0,\varepsilon_1\in\{\pm1\}
\right\}
\cup
\left\{
\frac{\varepsilon_2\mathbf{j}+\varepsilon_3\mathbf{k}}{\sqrt{2}}
\mid
\varepsilon_2,\varepsilon_3\in\{\pm1\}
\right\}.
\]
We denote this eight-element antipodal set by
\begin{equation}
\DD_0=\tau\QQ.
\end{equation}
Equivalently,
\[
\DD_0
=
\left\{
\pm\frac{1+\mathbf{i}}{\sqrt{2}},
\pm\frac{1-\mathbf{i}}{\sqrt{2}},
\pm\frac{\mathbf{j}+\mathbf{k}}{\sqrt{2}},
\pm\frac{\mathbf{j}-\mathbf{k}}{\sqrt{2}}
\right\}.
\]
Thus $\DD_0$ is precisely the subset of $\DD$ whose elements are supported
either on the coordinate pair $\{1,\mathbf{i}\}$ or on the coordinate
pair $\{\mathbf{j},\mathbf{k}\}$.

Now let
\[
c
=
\frac{
\varepsilon_0
+\varepsilon_1\mathbf{i}
+\varepsilon_2\mathbf{j}
+\varepsilon_3\mathbf{k}
}{2}
\in \CC,
\]
where each $\varepsilon_r$ belongs to $\{\pm1\}$. Direct multiplication
gives
\[
\begin{aligned}
\tau c
&=
\frac{1+\mathbf{i}}{\sqrt{2}}
\cdot
\frac{
\varepsilon_0
+\varepsilon_1\mathbf{i}
+\varepsilon_2\mathbf{j}
+\varepsilon_3\mathbf{k}
}{2}
=
\frac{1}{2\sqrt{2}}
\Bigl(
(\varepsilon_0-\varepsilon_1)
+
(\varepsilon_0+\varepsilon_1)\mathbf{i}
+
(\varepsilon_2-\varepsilon_3)\mathbf{j}
+
(\varepsilon_2+\varepsilon_3)\mathbf{k}
\Bigr).
\end{aligned}
\]
Exactly one of
\[
\varepsilon_0-\varepsilon_1,
\qquad
\varepsilon_0+\varepsilon_1
\]
is zero, while the other is equal to $2$ or $-2$. Similarly, exactly
one of
\[
\varepsilon_2-\varepsilon_3,
\qquad
\varepsilon_2+\varepsilon_3
\]
is zero, while the other is equal to $2$ or $-2$.

It follows that $\tau c$ has exactly two nonzero coordinates, each
equal to $1/\sqrt{2}$ or $-1/\sqrt{2}$. 
One nonzero coordinate belongs to the pair
$\{1,\mathbf{i}\}$, while the other belongs to the pair
$\{\mathbf{j},\mathbf{k}\}$. Hence the possible supports are
\[
\{1,\mathbf{j}\},
\qquad
\{1,\mathbf{k}\},
\qquad
\{\mathbf{i},\mathbf{j}\},
\qquad
\{\mathbf{i},\mathbf{k}\}.
\]

We divide these elements into two antipodal subsets. Define
\begin{equation}
\DD_1
=
\left\{
\frac{\varepsilon_0+\varepsilon_2\mathbf{j}}{\sqrt{2}}
\mid
\varepsilon_0,\varepsilon_2\in\{\pm1\}
\right\}
\cup
\left\{
\frac{\varepsilon_1\mathbf{i}+\varepsilon_3\mathbf{k}}{\sqrt{2}}
\mid
\varepsilon_1,\varepsilon_3\in\{\pm1\}
\right\}.
\end{equation}
Equivalently,
\[
\DD_1
=
\left\{
\pm\frac{1+\mathbf{j}}{\sqrt{2}},
\pm\frac{1-\mathbf{j}}{\sqrt{2}},
\pm\frac{\mathbf{i}+\mathbf{k}}{\sqrt{2}},
\pm\frac{\mathbf{i}-\mathbf{k}}{\sqrt{2}}
\right\}.
\]
Thus $\DD_1$ consists of the elements supported on the coordinate pairs
$\{1,\mathbf{j}\}$ and $\{\mathbf{i},\mathbf{k}\}$.

Similarly, define
\begin{equation}
\DD_2
=
\left\{
\frac{\varepsilon_0+\varepsilon_3\mathbf{k}}{\sqrt{2}}
\mid
\varepsilon_0,\varepsilon_3\in\{\pm1\}
\right\}
\cup
\left\{
\frac{\varepsilon_1\mathbf{i}+\varepsilon_2\mathbf{j}}{\sqrt{2}}
\mid
\varepsilon_1,\varepsilon_2\in\{\pm1\}
\right\}.
\end{equation}
Equivalently,
\[
\DD_2
=
\left\{
\pm\frac{1+\mathbf{k}}{\sqrt{2}},
\pm\frac{1-\mathbf{k}}{\sqrt{2}},
\pm\frac{\mathbf{i}+\mathbf{j}}{\sqrt{2}},
\pm\frac{\mathbf{i}-\mathbf{j}}{\sqrt{2}}
\right\}.
\]
Thus $\DD_2$ consists of the elements supported on the coordinate pairs
$\{1,\mathbf{k}\}$ and $\{\mathbf{i},\mathbf{j}\}$.

More explicitly, the location of $\tau c$ is determined by the signs as follows:
\[
\begin{array}{c|c|c}
\text{Condition on }\varepsilon_0,\varepsilon_1
&
\text{Condition on }\varepsilon_2,\varepsilon_3
&
\text{Support of }\tau c
\\
\hline
\varepsilon_0=-\varepsilon_1
&
\varepsilon_2=-\varepsilon_3
&
\{1,\mathbf{j}\}
\\
\varepsilon_0=-\varepsilon_1
&
\varepsilon_2=\varepsilon_3
&
\{1,\mathbf{k}\}
\\
\varepsilon_0=\varepsilon_1
&
\varepsilon_2=-\varepsilon_3
&
\{\mathbf{i},\mathbf{j}\}
\\
\varepsilon_0=\varepsilon_1
&
\varepsilon_2=\varepsilon_3
&
\{\mathbf{i},\mathbf{k}\}.
\end{array}
\]
Consequently,
\[
\tau \CC_1
=
\DD_1, \tau \CC_2
=
\DD_2.
\]
Together with $\tau\QQ=\DD_0$, this gives
\[
\tau \CC
=
\DD_0\sqcup \DD_1\sqcup \DD_2
=
\DD.
\]
Thus, we have proved that
$\DD=\tau \CC$.
Together with $\tau \CC=\CC\tau$, this gives
$\DD=\tau \CC=\CC\tau$.

Finally, using $\CC\CC=\CC$, $\CC\tau=\tau \CC$, and
$\tau^2=\mathbf{i}\in \CC$, we obtain
$\CC\DD=\DD$,
and
$\DD\CC
=\DD$.
Moreover,
$\DD\DD
=\CC$.
Thus $2O=\CC\cup \DD$ is closed under multiplication.

Every element of $2O$ is a unit quaternion, so its inverse is its
quaternionic conjugate. The sets $\CC$ and $\DD$ are both invariant under
quaternionic conjugation, and hence $2O$ is also closed under inversion.
Therefore $2O$ is a group. Since $\CC$ and $\DD$ are disjoint and each has
cardinality $24$, we have
\[
|2O|=48.
\]
\end{proof}

\begin{remark}
Since $\tau$ is a unit quaternion, the map
\[
L_\tau\colon\HH\longrightarrow\HH,
\qquad
q\longmapsto\tau q,
\]
is an orthogonal transformation of $\RR^4$. Therefore, the identity
$\DD=\tau \CC$
 shows directly that $\CC$ and $\DD$ are congruent realizations
of the regular $24$-cell.

The multiplication rules
\[
\CC\CC=\CC,
\qquad
\CC\DD=\DD\CC=\DD,
\qquad
\DD\DD=\CC
\]
have the following geometric meaning: multiplication by an element of
$\CC$ preserves each of the two $24$-cells, whereas multiplication by an
element of $\DD$ interchanges them.

Thus, $\CC$ is a normal subgroup of index $2$ in $2O$, and $\DD$ is the
nontrivial coset of $\CC$. The group $2O$ is the binary octahedral group.
\end{remark}

\section{The ternary relations}
\label{sec:canonical-relations}

\subsection{Sign lifts}
To summarize the previous section, for $r\in\ZZ_3\eqdef\ZZ/3\ZZ$, we have
\[
    \CC_r=\omega^r\QQ,
    \qquad
    \DD_r=\tau\omega^r\QQ.
\]
and
\[
    \CC=\CC_0\sqcup\CC_1\sqcup\CC_2,
    \qquad
    \DD=\DD_0\sqcup\DD_1\sqcup\DD_2.
\]

Recall that $\QQ$ is an abelian group w.r.t. multiplication and $\{1,-1\}$ is its normal subgroup. Multiplication on $\QQ$ induces an operation on the factor-group $\QQ/\{\pm1\}$, which we denote by $+$ for convenience. In fact, we have
\[
    K=\QQ/\{+1,-1\}\cong\FF_2^2.
\]
An element of the factor-group that contains an element $a$ is denoted by $\ol{a}$. Let us also introduce a notation: $q: K\longrightarrow \{1,{\mathbf i},{\mathbf j},{\mathbf k}\}$
where
\[
q_{\ol 1}=1,\qquad
q_{\ol{\mathbf i}}={\mathbf i},\qquad
q_{\ol{\mathbf j}}={\mathbf j},\qquad
q_{\ol{\mathbf k}}={\mathbf k}.
\]
Note that $q$ is simply a mapping that chooses a representative in every factor-class.

\begin{definition}
Define the many-valued map
\[
    \Lift:K\longrightarrow\Pow{\QQ},
    \qquad
    \Lift(u)=u, \text{ or }\Lift(u)=\{q_{u},-q_{u}\}.
\]
For $S\subseteq K^3$, define its coordinate-wise sign lift by
\[
    \widehat S
    \eqdef
    \bigcup_{(u,v,w)\in S}
    \Lift(u)\times\Lift(v)\times\Lift(w)
    \subseteq \QQ^3.
\]
\end{definition}
A key element of the quaternionic construction is a sign lift of the following ternary relation
\[
    \BB_0=\{(u,v,w)\in K^3\mid u+v+w=0\},
\]
which can be seen as a linear subspace of ${\rm GF}(2^2)^3$, and corresponding affine subspaces of the form
\[
    \BB_\eta
    =
    \{(u,v,w)\in K^3\mid u+v+w=\eta\} = (0,0,\eta)+\BB_0,
\]
for every value $\eta\in K$. Thus the four ternary relations are
$\BB_0$, $\BB_{\ol{\mathbf i}}$, $\BB_{\ol{\mathbf j}}$, $\BB_{\ol{\mathbf k}}$.

Every $\BB_\eta$ is a translate of $\BB_0$ and has $|K|^2=16$
elements. Its sign lift is
\[
    \widehat{\BB}_\eta
    =
    \bigcup_{u,v\in K}
    \Lift(u)\times\Lift(v)\times\Lift(\eta+u+v),
\]
and therefore, 
$|\widehat{\BB}_\eta|=4^2\cdot 2^3=128$.

\subsection{Quaternionic rotations of the lifted ternary relations}

For unit quaternions $r_1,r_2,r_3$ and $S\subseteq \QQ^3$, let us define
\[
    (r_1,r_2,r_3)[S]
    =
    \{(r_1a,r_2b,r_3c)\mid (a,b,c)\in S\}.
\]
All multiplications in this notation are left quaternionic multiplications.
They are orthogonal transformations of $\HH\cong\mathbb R^4$, which makes $(r_1,r_2,r_3)$ act unitarily on ${\mathbb R}^{12}$.

\begin{definition}[First ternary relation]
For $r\in\ZZ_3$, define
\[
    \Tau_{1,r}
    =
    (\tau\omega^r,\omega^r,\omega^r)
    [\widehat{\BB}_0] 
    \subseteq\DD_r\times\CC_r\times\CC_r.
\]
Their disjoint union (due to disjointness of $\DD_r\times\CC_r\times\CC_r, r\in\ZZ_3 $) is
\[
\begin{aligned}
    \Tau_1
    ={}&
    (\tau,1,1)[\widehat{\BB}_0]
    \sqcup
    (\tau\omega,\omega,\omega)[\widehat{\BB}_0]
    \sqcup
    (\tau\omega^2,\omega^2,\omega^2)[\widehat{\BB}_0].
\end{aligned}
\]
\end{definition}
Let 
\begin{equation}
\delta_0=\ol{\mathbf j}, \delta_1=\ol{\mathbf i}, \delta_2=\ol{\mathbf k}.
\end{equation}
\begin{definition}[Second ternary relation] 
For $r\in\ZZ_3$, define
\[
    \Tau_{2,r}
    =
    (\omega^r,\tau\omega^r,\tau\omega^r)
    [\widehat{\BB}_{\delta_r}]
    \subseteq\CC_r\times\DD_r\times\DD_r.
\]
Their disjoint union is
\[
\begin{aligned}
    \Tau_2
    ={}&
    (1,\tau,\tau)[\widehat{\BB}_{\ol{\mathbf j}}]
    \sqcup
    (\omega,\tau\omega,\tau\omega)
       [\widehat{\BB}_{\ol{\mathbf i}}]
    \sqcup
    (\omega^2,\tau\omega^2,\tau\omega^2)
       [\widehat{\BB}_{\ol{\mathbf k}}].
\end{aligned}
\]
\end{definition}

Thus the quaternionic relations are obtained by the two-step procedure
\[
    \BB_\eta
    \longmapsto
    \widehat{\BB}_\eta\subseteq \QQ^3
    \longmapsto
    (r_1,r_2,r_3)[\widehat{\BB}_\eta]\subseteq\HH^3.
\]
The first family uses three copies of the homogeneous parity $\BB_0$.
The second family uses the three shifted
$\BB_{\delta_0},\BB_{\delta_1},\BB_{\delta_2}$.

\begin{lemma}
\label{prop:tau-cardinality}
We have
$|\Tau_1|=|\Tau_2|=384$.
\end{lemma}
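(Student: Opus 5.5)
The plan is to reduce the count to three facts: each sign lift $\widehat{\BB}_\eta$ has $128$ elements, each rotation $(r_1,r_2,r_3)$ is injective on $\HH^3$, and the three pieces $\Tau_{i,0},\Tau_{i,1},\Tau_{i,2}$ are pairwise disjoint. Then $|\Tau_i|=3\cdot 128=384$ for $i=1,2$.

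First I would confirm $|\widehat{\BB}_\eta|=128$ carefully, since the paper states it quickly. For fixed $u,v\in K$ the third coordinate $w=\eta+u+v$ is determined, so $|\BB_\eta|=16$. Each $\Lift(u)=\{q_u,-q_u\}$ is exactly the preimage of $u$ under the projection $\QQ\to K$. Hence the products $\Lift(u)\times\Lift(v)\times\Lift(w)$ for distinct triples $(u,v,w)\in\BB_\eta$ are disjoint, because they project to distinct points of $K^3$. Each product has $2^3=8$ elements, giving $16\cdot 8=128$.

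Next, left multiplication by a unit quaternion is a bijection of $\HH$ (its inverse is multiplication by the conjugate). So the map $(a,b,c)\mapsto(r_1a,r_2b,r_3c)$ is injective, and $|(r_1,r_2,r_3)[\widehat{\BB}_\eta]|=128$ for every choice of $r_1,r_2,r_3$ and $\eta$. In particular $|\Tau_{1,r}|=|\Tau_{2,r}|=128$ for each $r\in\ZZ_3$.

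The remaining step, which is the only one with content, is disjointness over $r$.
\begin{itemize}
\item For $\Tau_1$: an element $(a,b,c)\in\widehat{\BB}_0\subseteq\QQ^3$ is sent to $(\tau\omega^ra,\omega^rb,\omega^rc)$. Since $\omega^r\QQ=\CC_r$ and $\tau\omega^r\QQ=\tau\CC_r=\DD_r$ (as recorded in the sign-lift subsection), $\Tau_{1,r}\subseteq\DD_r\times\CC_r\times\CC_r$.
\item For $\Tau_2$: the same reasoning gives $\Tau_{2,r}\subseteq\CC_r\times\DD_r\times\DD_r$.
\end{itemize}
The cosets $\CC_0,\CC_1,\CC_2$ are pairwise disjoint by the lemma that $\CC$ is a subgroup. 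The sets $\DD_0,\DD_1,\DD_2$ are pairwise disjoint by the proposition on $2O$: they have distinct coordinate-pair supports, or equivalently they are $\tau$ times disjoint sets. Therefore already the second coordinate separates the three pieces, the unions are disjoint, and both cardinalities equal $384$.

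I expect no real obstacle. The only point needing care is the disjointness of the $\DD_r$, which I would cite from the support table in the proof of the $2O$ proposition.
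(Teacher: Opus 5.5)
Your proposal is correct and follows the paper's own route: $|\Tau_{i,r}|=|\widehat{\BB}_\eta|=128$ for each $r\in\ZZ_3$, and the three pieces are disjoint because they lie in $\DD_r\times\CC_r\times\CC_r$ (respectively $\CC_r\times\DD_r\times\DD_r$), so $3\cdot 128=384$. Your version only spells out the injectivity of left multiplication and the coset disjointness, which the paper records when the sets are defined rather than inside the proof.
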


\begin{proof}
For every $r\in\ZZ_3$, we have
\[
    |\Tau_{1,r}|=|\Tau_{2,r}|
    =|\widehat{\BB}_\eta|=128.
\]
Therefore, a union over $r\in\ZZ_3$ gives $3\cdot 128=384$ elements.
\end{proof}

\section{The quaternionic construction of $840$-point kissing arrangement}
\label{sec:canonical-arrangement}
Recall that we
identify $\mathbb R^{12}$ with
$\HH^3$. Define the axial families
\[
    \EE_1=\{(c,0,0)\mid c\in\CC\},
    \qquad
    \EE_2=\{(0,c,0)\mid  c\in\CC\},
    \qquad
    \EE_3=\{(0,0,c)\mid  c\in\CC\}.
\]
These are three $24$-cells supported on the first, second and third $4$-dimensional factors of $\mathbb R^{12}$.
Define the mixed families
\[
    \MM_1
    =
    \left\{
       \left(\frac a{\sqrt2},\frac b2,\frac c2\right)\mid
       (a,b,c)\in\Tau_1
    \right\},
\]
and
\[
    \MM_2
    =
    \left\{
       \left(\frac a2,\frac b{\sqrt2},\frac c2\right)\mid
       (a,b,c)\in\Tau_2
    \right\}.
\]
Finally, put
\[
    \XX_0
    =
    \EE_1\sqcup\EE_2\sqcup\EE_3
    \sqcup\MM_1\sqcup\MM_2.
\]
In the lifting notation, the mixed pieces are
\[
\begin{aligned}
    \MM_{1,r}
    &=
    \left(
       \frac{\tau\omega^r}{\sqrt2},
       \frac{\omega^r}{2},
       \frac{\omega^r}{2}
    \right)[\widehat{\BB}_0],
    \\
    \MM_{2,r}
    &=
    \left(
       \frac{\omega^r}{2},
       \frac{\tau\omega^r}{\sqrt2},
       \frac{\tau\omega^r}{2}
    \right)[\widehat{\BB}_{\delta_r}],
\end{aligned}
\]
with
\[
    \MM_1=\bigsqcup_r\MM_{1,r},
    \qquad
    \MM_2=\bigsqcup_r\MM_{2,r}.
\]

\begin{theorem}
\label{thm:canonical-kissing}
The set $\XX_0$ is an antipodal $840$-point kissing arrangement in
$\mathbb R^{12}$. In other words, all its points are unit vectors and
\[
    \ip{x}{y}\le\frac12
\]
for all distinct $x,y\in\XX_0$.
\end{theorem}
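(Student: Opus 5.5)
The plan is to verify three things directly: unit norms, the cardinality $840$ together with antipodality, and the bound on inner products. The inner-product bound is checked family by family, using one tool throughout. Left multiplication by a unit quaternion is orthogonal, so $\ip{ra}{rb}=\ip{a}{b}$ and $\ip{ra}{sb}=\ip{a}{r^{-1}s\,b}$.

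\emph{Norms, count and antipodality.} For $(a,b,c)\in\Tau_1$ all components are unit quaternions, so the corresponding point of $\MM_1$ has squared norm $\tfrac12+\tfrac14+\tfrac14=1$; the same holds for $\MM_2$. By Lemma~\ref{prop:tau-cardinality} we get $|\XX_0|=3\cdot 24+384+384=840$. Antipodality holds because $\CC=-\CC$ and because each $\Lift(u)$ is closed under sign, so each $\widehat{\BB}_\eta$ is closed under $(a,b,c)\mapsto(-a,-b,-c)$.

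\emph{Easy cases.}
\begin{itemize}
\item Axial versus axial: inner products are either $0$, for different factors, or inner products inside the $24$-cell $\CC$, which lie in $\{0,\pm\tfrac12,\pm1\}$.
\item Axial versus mixed: the inner product equals a single weighted term. For instance, $\ip{(c,0,0)}{(a/\sqrt2,b/2,c'/2)}=\ip{c}{a}/\sqrt2$ with $c\in\CC$ and $a\in\DD$. Elements of $\CC$ and $\DD$ have inner products of absolute value at most $1/\sqrt2$, so the product is at most $\tfrac12$. Terms of the form $\ip{c}{b}/2$ are trivially at most $\tfrac12$.
\item $\MM_1$ versus $\MM_1$ with the same $r$: after cancelling $\omega^r$ and $\tau\omega^r$, the inner product is $\tfrac12\ip{a}{a'}+\tfrac14\ip{b}{b'}+\tfrac14\ip{c}{c'}$ with entries in $\QQ$. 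Each inner product lies in $\{0,\pm1\}$ and vanishes unless the $K$-classes agree. Two distinct elements of $\BB_0$ differ in at least two coordinates, so at most one term survives, which gives a bound of $\tfrac12$. If the classes coincide, at least one sign is flipped, and a short check gives at most $\tfrac12$.
\item $\MM_1$ versus $\MM_1$ with $r\ne s$: we get $\ip{a}{\omega^{s-r}a'}$ with $\omega^{s-r}a'\in\CC_{s-r}$ half-integral, so each term has absolute value at most $\tfrac12$. Hence the total is at most $\tfrac12\cdot\tfrac12+\tfrac14\cdot\tfrac12+\tfrac14\cdot\tfrac12=\tfrac12$.
\item $\MM_2$ versus $\MM_2$: the same argument works, using that $\BB_{\delta_r}$ is a coset of $\BB_0$ and hence also has minimum distance $2$.
\end{itemize}

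\emph{Main obstacle: $\MM_1$ versus $\MM_2$.} Here the three terms are $\tfrac1{\sqrt2}\cdot\tfrac12\ip{a}{a'}$ with $a\in\DD$, $a'\in\CC$; then $\tfrac12\cdot\tfrac1{\sqrt2}\ip{b}{b'}$ with $b\in\CC$, $b'\in\DD$; then $\tfrac14\ip{c}{c'}$ with $c\in\CC$, $c'\in\DD$. Each $\CC$--$\DD$ inner product lies in $\{0,\pm\tfrac1{\sqrt2}\}$, so the crude bound is $\tfrac14+\tfrac14+\tfrac1{4\sqrt2}>\tfrac12$. We must therefore show that the first two terms cannot both equal $\tfrac14$ while the third is positive.

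The plan is to reduce, by the invariance above, to the relative data $(r,s)\in\ZZ_3^2$ and the lifted elements of $\QQ^3$. We then record which coordinate pair supports each product $\tau\omega^{r}x$ (the support table in the proof that $\DD=\tau\CC$). For each component, we decide whether the $\CC$--$\DD$ inner product is nonzero in terms of the $K$-classes of the entries. This turns the bad event into a system of affine conditions over $K=\FF_2^2$ on the triples $(u,v,w)\in\BB_0$ and $(u',v',w')\in\BB_{\delta_s}$. The choice $\delta_0=\ol{\mathbf j}$, $\delta_1=\ol{\mathbf i}$, $\delta_2=\ol{\mathbf k}$ should make these conditions incompatible with $u+v+w=0$ and $u'+v'+w'=\delta_s$. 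Alternatively, when all three terms are nonzero, the signs forced by the lifts should make their sum at most $\tfrac12$.

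I expect this $3\times3$ case analysis over $(r,s)$ to be the bulk of the work. If the parity argument proves awkward, the fallback is a finite enumeration of $9$ blocks of $128\times128$ pairs, organized by $K$-classes.
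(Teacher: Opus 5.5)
Your treatment of norms, cardinality, antipodality, the axial cases, and the $\MM_1$--$\MM_1$ and $\MM_2$--$\MM_2$ cases is correct and matches the paper. The gap is in the $\MM_1$--$\MM_2$ case, which is the only nontrivial one. You isolate the bad event correctly, but you only assert that the affine conditions ``should'' be incompatible; you do not say why.

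Your alternative route cannot work. The set $\widehat{\BB}_\eta$ contains every sign choice independently in each coordinate. So if $\ip{a}{a'},\ip{b}{b'},\ip{c}{c'}$ were ever all nonzero, flipping the signs of $a',b',c'$ individually would make all three terms positive. The sum would then be $\tfrac14+\tfrac14+\tfrac1{4\sqrt2}>\tfrac12$. What must be proved is therefore purely a vanishing statement: the three component products are never simultaneously nonzero.

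The missing idea is an explicit incidence criterion combined with a parity count. Take $c\in\omega^r\Lift(u)\subseteq\CC_r$ and $d\in\tau\omega^s\Lift(v)\subseteq\DD_s$. Then $\ip{c}{d}=\pm\RePart(q_{u+v}A_{r,s})$ with $A_{r,s}=\ol{\omega}^{\,s}\ol{\tau}\omega^r$. Computing the nine quaternions $\sqrt2A_{r,s}$ shows that each has exactly two nonzero coordinates. From this one reads off that $\ip{c}{d}\ne0$ iff $\ell_{r+s}(u+v+\delta_r)=1$, where $\ell_0=x_2$, $\ell_1=x_1$, $\ell_2=x_1+x_2$ on $K\cong\FF_2^2$.

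Two features of this criterion are what make the argument work:
\begin{itemize}
\item The functional depends only on $r+s$. It is therefore the same in all three coordinates of a pair from $\Tau_{1,r}\times\Tau_{2,s}$.
\item The shift is $\delta$ of the $\CC$-coset index. That index is $s$ in the first coordinate and $r$ in the other two.
\end{itemize}
Summing the three incidence bits, using $u+v+w=0$ and $u'+v'+w'=\delta_s$, gives $\ell_{r+s}\bigl((u+v+w)+(u'+v'+w')+\delta_s+2\delta_r\bigr)=\ell_{r+s}(\delta_s+\delta_s)=0$. Hence the number of nonzero terms is $0$ or $2$. If the two nonzero terms are the first two, the sum is at most $\tfrac14+\tfrac14$; otherwise it is at most $\tfrac14+\tfrac1{4\sqrt2}$. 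Either way it is at most $\tfrac12$.

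This cancellation is exactly what the specific choice $\delta_0=\ol{\mathbf j}$, $\delta_1=\ol{\mathbf i}$, $\delta_2=\ol{\mathbf k}$ is designed to produce. Without this parity argument, or the exhaustive $9\times128\times128$ check you mention but do not carry out, your proposal does not establish the theorem.
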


\begin{remark} Note that the squared norm of any vector in $\widehat{\BB}_{\eta}\subseteq \HH^3$ receives an equal contribution from each quaternionic factor $\HH$, namely $1$. Multiplication by a unit quaternion does not change the norm. After the transformations defining $\MM_1$ and $\MM_2$, every vector has the squared norm $1$, but the contributions of the three quaternionic factors $\HH$ are no longer equal. They are
$\left(\frac12,\frac14,\frac14\right)$
for vectors in $\MM_1$, and
$\left(\frac14,\frac12,\frac14\right)$
for vectors in $\MM_2$. In particular, in both cases the contribution from the third quaternionic factor is $\frac14$, which is smaller than the average contribution $\frac38$ from the first two factors. In this sense, the construction ``frees'' some space in the third $\HH$-factor. It is precisely this freed space that makes it possible to squeeze in an additional $841$-st vector, as discussed in Section~\ref{sec:comparison-canonical-840}. 

Unfortunately, freeing some space in the third factor is not by itself sufficient to accommodate an additional vector. The $841$-point kissing arrangement also slightly deforms the vectors in $\EE_1$, $\EE_2$, $\MM_1$, and $\MM_2$, while deforming the vectors in $\EE_3$ much more substantially.
\end{remark}

The proof of Theorem~\ref{thm:canonical-kissing}, together with the verification that this arrangement is distinct from all previously known $840$-point configurations, can be found in the Appendix.

\section{Comparison with the numerical $841$-point kissing arrangement}
\label{sec:comparison-canonical-840}

We next compare the numerical $841$-point kissing arrangement from~\cite{takhanov2026structurekissingarrangementsmathbb} (which can be found in the corresponding Github repository) with the quaternionic $840$-point arrangement that we describe in the previous section.  Let
\[
\mathcal{X}_{841}=\{\widetilde{x}_1,\ldots,\widetilde{x}_{841}\}
\subset \mathbb{S}^{11}
\]
denote the numerical configuration.  As described above, one point is
intrinsically distinguished from the remaining configuration; in the supplied
coordinate file it is the vector with index $751$.  We remove this point and
write
\[
\mathcal{X}_{840}=\mathcal{X}_{841}\setminus\{\widetilde{x}_{751}\}
   =\{x_1,\ldots,x_{840}\}.
\]
Let
\[
\mathcal{Y}_{840}=\{y_1,\ldots,y_{840}\}
\]
be the quaternionic arrangement, ordered as
\[
\mathcal{Y}_{840}
 =\underbrace{\mathcal{E}_1\sqcup\mathcal{E}_2
 \sqcup\mathcal{M}_1\sqcup\mathcal{M}_2}_{816\text{ vectors}}
 \sqcup
 \underbrace{\mathcal{E}_3}_{24\text{ vectors}}.
\]
Here $\mathcal{E}_1,\mathcal{E}_2,\mathcal{E}_3$ are the three axial regular
$24$-cells, and $\mathcal{M}_1,\mathcal{M}_2$ are the two mixed
$384$-point families.  In particular, $y_1,\ldots,y_{816}$ form the
quaternionic antipodal backbone together with the first two axial $24$-cells,
whereas $y_{817},\ldots,y_{840}$ are precisely the third axial $24$-cell
$\mathcal{E}_3=\{(0,0,c)\mid c\in C\}$,
which lies in the last quaternionic coordinate space, i.e. in
$\{0\}\times \{0\}\times\mathbb{H}$.

To compare the two configurations independently of their coordinate frames and
labelings, we solved the problem
\begin{equation}
\label{eq:assignment-procrustes-840}
\min_{\pi\in S_{840},\ U\in O(12)}
\sum_{i=1}^{840}
\left\|x_i-Uy_{\pi(i)}\right\|_2^2.
\end{equation}
For fixed $U$, the optimal permutation is obtained by solving the linear
assignment problem with costs
$\|x_i-Uy_j\|_2^2$.  For fixed $\pi$, the optimal orthogonal matrix is obtained
from the orthogonal Procrustes problem.  We alternated these two exact updates
from several structure-based initializations.  Since
\eqref{eq:assignment-procrustes-840} is nonconvex, the value reported below is
the best value found numerically rather than a proof of the global minimum.

The best alignment gave
\[
\sum_{i=1}^{840}\left\|x_i-Uy_{\pi(i)}\right\|_2^2
 =11.96241654955465,
\]
corresponding to a root-mean-square Euclidean discrepancy of
\[
\left(
\frac1{840}\sum_{i=1}^{840}
\left\|x_i-Uy_{\pi(i)}\right\|_2^2
\right)^{1/2}
 =0.119335544088101.
\]

For each numerical point define the squared matching residual
\[
\rho_i=\left\|x_i-Uy_{\pi(i)}\right\|_2^2,
\]
and let
\[
\rho_{(1)}\leq\rho_{(2)}\leq\cdots\leq\rho_{(840)}
\]
be their order statistics.  Figure~\ref{fig:canonical840-order-statistics}
plots $i$ against $\rho_{(i)}$.

\begin{figure}[htbp]
    \centering
    \includegraphics[width=0.88\textwidth]
    {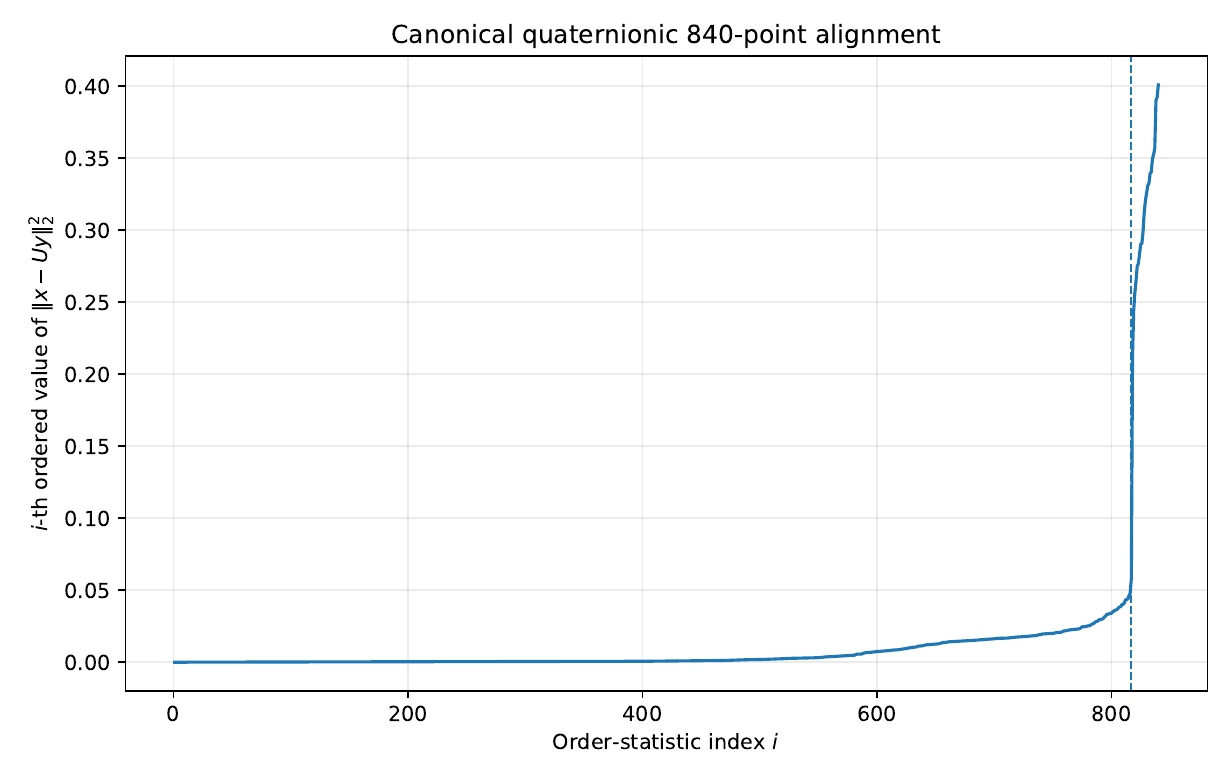}
    \caption{Order statistics of the squared matching residuals
    $\|x_i-Uy_{\pi(i)}\|_2^2$ for the best permutation--orthogonal transformation
    between the $840$-point subsystem of the numerical $841$-point arrangement
    and the quaternionic $840$-point arrangement.  The vertical
    dashed line separates the first $816$ residuals from the largest $24$.}
    \label{fig:canonical840-order-statistics}
\end{figure}

The residuals exhibit a sharp structural separation.  All canonical vectors
$y_1,\ldots,y_{816}$ are matched among the first $816$ order statistics, while
the $24$ largest residuals correspond exactly to
$y_{817},\ldots,y_{840}\in\mathcal{E}_3$.  Quantitatively, the contribution of
the first $816$ canonical vectors is
\[
\sum_{\pi(i)\leq816}\rho_i
 =4.66601501226742,
\qquad
\left(
\frac1{816}\sum_{\pi(i)\leq816}\rho_i
\right)^{1/2}
 =0.075618487503312,
\]
and their largest squared residual is
\[
\max_{\pi(i)\leq816}\rho_i
 =\rho_{(816)}
 =0.0479262610331176.
\]
By contrast, the third axial $24$-cell contributes
\[
\sum_{\pi(i)\geq817}\rho_i
 =7.29640153728723,
\qquad
\left(
\frac1{24}\sum_{\pi(i)\geq817}\rho_i
\right)^{1/2}
 =0.551377122050146,
\]
with
\[
\rho_{(817)}=0.0580657132382830,
\qquad
\rho_{(840)}=0.400924978439449.
\]
Thus, after a single orthogonal change of coordinates and a relabeling, the $816$-point cores of the two arrangements are very close, while nearly all of the visible deformation is concentrated in the remaining $24$ points.

Restoring the deleted special point
$\widetilde{x}_{751}$ therefore gives the structural decomposition
\[
\mathcal{X}_{841}
 =\underbrace{\mathcal{B}_{816}}_{\text{small deformation of the canonical core}}
 \sqcup
 \underbrace{\mathcal{D}_{24}}_{\text{deformation of }\mathcal{E}_3}
 \sqcup
 \underbrace{\{\widetilde{x}_{751}\}}_{\text{special additional point}},
\]
where $\mathcal{D}_{24}$ denotes the $24$ numerical vectors matched to
$\mathcal{E}_3$.  In this sense, the exceptional $25$-point part of the
$841$-point arrangement is naturally interpreted as a deformed regular
$24$-cell together with one additional distinguished vector, while the other
$816$ vectors remain close to the quaternionic arrangement.

All scripts generating plots and data can be found in the \href{https://github.com/k-nic/quaternionic}{Github} repository accompanying the paper.

\section{A general notion of quaternionic construction}\label{quaternionic-general}
We now introduce a general class of quaternionic configurations in dimensions
divisible by $4$. Let
$\RR^{4k}\cong \HH^k$.
For $j=1,\ldots,k$, define the $j$-th axial $24$-cell by
\[
    \Ecal_j
    =
    \{0\}^{j-1}\times \CC\times \{0\}^{k-j}
    \subseteq \HH^k.
\]
Thus, the construction contains $k$ mutually orthogonal $24$-cells,
$\Ecal_1,\ldots,\Ecal_k$,
one in each quaternionic coordinate factor.

Let
\[
    w=(w_1,\ldots,w_k),
    \qquad
    w_j\ge 0,
    \qquad
    \sum_{j=1}^k w_j^2=1,
\]
Given a
relation
$\Rcal\subseteq (2O)^k$,
define its weighted quaternionic realization by
\[
    w\Rcal
    \eqdef
    \bigl\{
        (w_1q_1,\ldots,w_kq_k)\mid
       (q_1,\ldots,q_k)\in\Rcal
    \bigr\}.
\]

\begin{definition}\label{def:quaternionic-construction}
A \emph{quaternionic construction} in
$\RR^{4k}\cong\HH^k$ is a configuration of the form
\[
    \Xcal
    =
    \bigcup_{j=1}^k \Ecal_j
    \;\cup\;
    \bigcup_{\alpha} w^{(\alpha)}\Rcal_{\alpha},
\]
where
\[
    \Rcal_{\alpha}\subseteq (2O)^k
\]
is a relation specifying the admissible quaternionic labels, and
\[
    w^{(\alpha)}
    =
    \bigl(
        w_1^{(\alpha)},\ldots,w_k^{(\alpha)}
    \bigr),
    \qquad
    \sum_{j=1}^k
    \bigl(w_j^{(\alpha)}\bigr)^2=1,
\]
is a corresponding weight vector.
\end{definition}

Thus, a quaternionic construction consists of 
$k$ mutually orthogonal $24$-cells, together with one or more weighted
families of mixed vectors whose quaternionic coordinates belong to $2O$. The
weight vectors describe how the squared norm of each mixed family is
distributed among the $k$ quaternionic factors.

The notion of a quaternionic construction encompasses several of known kissing arrangements in dimensions divisible by $4$. In
particular, it includes
\begin{enumerate}
\item the regular $24$-cell in $\mathbb R^4$, consisting of $24$ points~\cite{delaat2024optimalityuniquenessd4root};
\item the $E_8$ kissing arrangement in $\mathbb R^8$, consisting of $240$ points;
\item the $840$-point kissing arrangement in $\mathbb R^{12}$ constructed above;
\item of $1579$ kissing arrangements of size $840$ in $\mathbb R^{12}$ corresponding to maximum independent sets in signed Johnson graphs, $68$ are antipodal. Of those $68$ antipodal configurations, $64$ have quaternionic constructions. This includes $17$ configurations, which can be obtained from Leech-Slone lifting~\cite{Leech_Sloane_1971} of $17$ non-isomorphic binary $(12,4,4)$-codes~\cite{Best1978};
\item the $4320$-point kissing arrangement of the Barnes--Wall lattice in $\mathbb R^{16}$~\cite{Barnes_Wall_1959};
\item the $19448$-point kissing arrangement in $\mathbb R^{20}$ constructed by Cohn and Li~\cite{cohn2026improvedkissingnumbersseventeen};
\end{enumerate}
Thus, the quaternionic construction captures a sequence of remarkable
configurations in
\[
    \mathbb R^4,\ \mathbb R^8,\ \mathbb R^{12},\
    \mathbb R^{16},\ \mathbb R^{20}.
\]
We give an explicit verification for the $E_8$ configuration below, since
in this case the argument is particularly short. We discuss
other configurations in the following subsection.
\begin{theorem}
\label{prop:E8-quaternionic}
Identify
$\RR^8\cong\HH^2$. 
Let
\[
    \Rcal_{E_8}
    \eqdef
    (\mathcal{D}_0\times \mathcal{D}_0)
    \cup(\mathcal{D}_1\times \mathcal{D}_1)
    \cup(\mathcal{D}_2\times \mathcal{D}_2)
    \subseteq(2O)^2.
\]
Then, the configuration
\[
    \Xcal_{E_8}
    =
    (\CC\times\{0\})
    \cup
    (\{0\}\times\CC)
    \cup
    \frac1{\sqrt2}\Rcal_{E_8}
\]
is the normalized $E_8$ root system. In particular,
$E_8$ is a quaternionic construction with $k=2$, two axial
$24$-cells, and mixed weight vector
\[
    w=\left(\frac1{\sqrt2},\frac1{\sqrt2}\right).
\]
\end{theorem}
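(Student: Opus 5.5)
The plan is to prove the statement in three steps: count the points, check unit norms, and verify the kissing condition $\ip{x}{y}\le\frac12$ for distinct $x,y$. A $240$-point kissing arrangement in $\RR^8$ is the normalized $E_8$ root system, by the uniqueness theorem of Bannai--Sloane/Levenshtein. To avoid relying on that, I would also identify the set with a standard model directly.

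\textbf{Counting and norms.} The two axial sets contribute $24+24=48$ vectors. $\Rcal_{E_8}$ is a disjoint union of three sets $\DD_r\times\DD_r$ of size $64$, giving $192$, so $|\Xcal_{E_8}|=240$. Every element of $2O$ is a unit quaternion, so a point $\frac1{\sqrt2}(a,b)$ has squared norm $\frac12+\frac12=1$. The axial points are unit by definition.

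\textbf{Inner products.} Inner products between $\CC\times\{0\}$ and $\{0\}\times\CC$ vanish. Within one axial $24$-cell, distinct elements of $\CC$ have inner product in $\{0,\pm\frac12,-1\}$, as one checks from the explicit coordinates. An axial point $(c,0)$ against $\frac1{\sqrt2}(a,b)$ with $a\in\DD$ gives $\frac1{\sqrt2}\ip{c}{a}$. Since $\ip{c}{a}\in\{0,\pm\frac1{\sqrt2}\}$ for $c\in\CC$ and $a\in\DD$ (coordinates $\pm1,\pm\frac12$ against two entries $\pm\frac1{\sqrt2}$), this value lies in $\{0,\pm\frac12\}$. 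For two mixed points $\frac1{\sqrt2}(a,b)$ and $\frac1{\sqrt2}(a',b')$, the inner product is $\frac12(\ip{a}{a'}+\ip{b}{b'})$. Elements of $\DD$ have pairwise inner products in $\{0,\pm\frac12,-1\}$, so the only risk is $\ip{a}{a'}=\ip{b}{b'}=\frac12$, and this is where the block structure is used.

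\textbf{The key claim.} I expect the main obstacle to be showing that this case never occurs. The claim is that if $a,a'\in\DD$ have $\ip{a}{a'}=\frac12$, then $a$ and $a'$ lie in different classes $\DD_r\ne\DD_{r'}$. This holds because within $\DD_r$ the supports are the complementary pairs, as described after the Proposition. Two elements with the same support have inner product $0$ or $\pm1$, and elements with complementary supports are orthogonal. So inner products inside $\DD_r$ lie in $\{0,\pm1\}$.

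\textbf{Concluding the kissing condition.} Suppose $(a,b)\in\DD_r^2$ and $(a',b')\in\DD_{r'}^2$.
\begin{itemize}
\item If $r=r'$, both $\ip{a}{a'}$ and $\ip{b}{b'}$ lie in $\{0,\pm1\}$. Their sum is at most $1$ unless both equal $1$, which forces $(a,b)=(a',b')$. So the inner product is at most $\frac12$.
\item If $r\ne r'$, the supports of $a$ and $a'$ share exactly one coordinate, so $\ip{a}{a'}\in\{\pm\frac12\}$, and likewise for $b,b'$. The sum is at most $1$, giving an inner product of at most $\frac12$.
\end{itemize}
Hence $\Xcal_{E_8}$ is a $240$-point kissing arrangement.

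\textbf{Identifying $E_8$.} To match an explicit model, I would apply the orthogonal map $\mathrm{diag}(I,\sqrt2\,\cdot)$ rescaled and show that $\sqrt2\,\Xcal_{E_8}$ is contained in an $E_8$ lattice. The natural lattice is the one whose quaternionic description is $\{(x,y)\in\mathcal H^2 \mid x\equiv y \bmod (1+\mathbf i)\}$, with $\mathcal H$ the Hurwitz order, after rotating by $\tau$. Alternatively, I would simply cite uniqueness of the $240$-point kissing configuration in $\RR^8$.
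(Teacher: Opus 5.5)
Your proposal is correct and takes the same route as the paper. You verify the $240$-point kissing property using exactly the coset facts of Lemma~\ref{lem:coset-scalar-products}: inner products lie in $\{0,\pm1\}$ within one $\DD_r$, in $\{\pm\frac12\}$ across different $\DD_r$, and in $\{0,\pm\frac1{\sqrt2}\}$ between $\CC$ and $\DD$. You then invoke uniqueness of the $240$-point kissing configuration in $\RR^8$, which is the paper's (much terser) argument spelled out.

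One small slip: the remark that the ``only risk'' is $\ip{a}{a'}=\ip{b}{b'}=\frac12$ is not literally accurate, since $a=a'$ with $\ip{b}{b'}>0$ must a priori also be excluded. Your subsequent case split $r=r'$ versus $r\neq r'$ handles that case correctly. The Hurwitz-lattice identification you sketch is a valid alternative but is not needed.
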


\begin{proof}
It is straightforward to check, using Lemma~\ref{lem:coset-scalar-products}, that $\Xcal_{E_8}$ is a kissing arrangement of size $240$. Since $E_8$ root system is uniquely defined by this property, then it is isometric to $\Xcal_{E_8}$.
\end{proof}

\subsection{An algorithm for checking quaternionic structure: the Barnes--Wall and Cohn--Li configurations}\label{barnes}

In the preceding section we showed that the $E_8$ kissing arrangement admits natural quaternionic description.
The same phenomenon persists in dimensions $16$ and $20$: the Barnes--Wall
kissing arrangement in $\mathbb{R}^{16}$ and the Cohn--Li $19448$-point
arrangement in $\mathbb{R}^{20}$ also admit realizations built from copies of
the regular $24$-cell and the binary octahedral group $2O$.
For these two configurations, however, a direct structural proof is longer and less transparent.  We therefore
use an algorithmic approach: we search through orthogonal collections of
$24$-cells, convert each such collection into quaternionic coordinates, and
retain only those decompositions for which the remaining vectors have the
expected $2O$ structure.

Let $I\subset S^{d-1}$ be any kissing arrangement, where $d=4k$.  We search for $k$ mutually orthogonal
$4$-dimensional subspaces spanned by regular $24$-cells contained in $I$.

For each detected $24$-cell $u$ in $I$, let $H_u$ denote its $4$-dimensional
linear span.  Choose an orthogonal identification of $H_u$ with
$\mathbb H$ that maps $u$ to the standard $24$-cell $\mathcal C$.
We then test the projection of the entire configuration onto $H_u$.
More precisely, for every nonzero vector in
$\operatorname{Proj}_{H_u} I$, 
its normalized direction is required to belong to the binary octahedral
group $2O$.  

Only a $24$-cell that passes this projection test is retained as a vertex
of the graph $G$.  Two retained vertices $u$ and $v$ are adjacent when
\[
        H_u\perp H_v.
\]
Thus, a $k$-clique
\[
        \{u_1,\ldots,u_k\}
\]
determines an orthogonal decomposition
\[
        \mathbb R^{4k}
        =
        H_{u_1}\oplus\cdots\oplus H_{u_k}.
\]
The graph is constructed incrementally: whenever a new admissible
$24$-cell is found, it is joined to the previously found admissible
$24$-cells having orthogonal spans, and we immediately test whether it
completes a $k$-clique.  Hence there is no need to construct the graph of
all $24$-cells in advance or to enumerate all its $k$-cliques.

Given such a $k$-clique, we choose an orthogonal transformation
$T\in O(4k)$ mapping its $24$-cells to the standard coordinate copies
\[
\begin{aligned}
&\mathcal C\times 0\times\cdots\times 0,\quad
0\times\mathcal C\times 0\times\cdots\times 0,\quad\ldots,\quad
0\times\cdots\times 0\times\mathcal C .
\end{aligned}
\]
Then, the required binary-octahedral condition for a
quaternionic construction is satisfied by construction. 


The search used in our computations is summarized in
Algorithm~\ref{alg:quaternionic-search}.  Using this algorithm we verified
that the Barnes--Wall and the Cohn--Li arrangements admit quaternionic
constructions.

\begin{algorithm}[ht]
\caption{Search for a quaternionic structure}
\label{alg:quaternionic-search}
\begin{algorithmic}[1]
\Require A kissing arrangement $I\subset S^{4k-1}$, the standard $24$-cell
$\mathcal C\subset\mathbb H$, and the binary octahedral group $2O$
\Ensure A $k$-clique defining a quaternionic decomposition, if one is found

\State Extract the antipodal lines contained in $I$
\State Select the antipodal lines to be used as possible frame lines
\State Initialize the graph $G$ with no vertices

\For{each antipodal line of $I$}
    \State Find possible frame lines making inner product of absolute value
    $1/2$ with the current line
    \State Enumerate mutually orthogonal $4$-tuples among these frame lines

    \For{each such orthogonal $4$-tuple}
        \State Check whether the corresponding $12$ antipodal lines of a regular $24$-cell occur in $I$

        \If{the $24$-cell is present}
            \State Identify its span with $\mathbb H$ so that the
            $24$-cell is mapped to $\mathcal C$

            \State Test whether every nonzero projection of $I$ onto this
            span has normalized direction in $2O$

            \If{the projection test succeeds}
                \State Add the corresponding $4$-dimensional span to $G$
                if it has not already been retained

                \State Join the new vertex to every previously retained
                vertex having an orthogonal span

                \State Search for a $k$-clique containing the new vertex

                \If{such a $k$-clique $\{u_1,\ldots,u_k\}$ is found}
                    \State Construct $T\in O(4k)$ mapping
                    $u_1,\ldots,u_k$ to the $k$ coordinate copies of
                    $\mathcal C$

                   \State \Return $\{u_1,\ldots,u_k\}$ and $T$
                   
                \EndIf
            \EndIf
        \EndIf
    \EndFor
\EndFor

\State \Return no decomposition found
\end{algorithmic}
\end{algorithm}

An optimal kissing arrangement cannot be given by a quaternionic construction already for $k=6$, i.e. $d=24$. We applied Algorithm~\ref{alg:quaternionic-search} to the kissing arrangement in ${\mathbb R}^{24}$, defined by the Leech lattice, and it gave a negative result. 
The Algorithm also showed nonexistence of a quaternionic construction for $204520$-point record kissing arrangement in ${\mathbb R}^{28}$ found by the PackingStar system~\cite{ma2026findingkissingnumbersgametheoretic}. The latter is not surprising as the construction involves a certain lifting of Leech lattice vectors to ${\mathbb R}^{28}$~\cite{cohn2011rigidity,doi:10.1137/16M1095810}).

\bibliographystyle{IEEEtran}
\bibliography{lit}

\appendix

\section*{Proof of Theorem~\ref{thm:canonical-kissing}}\label{main-proof}
\subsection{Scalar products of elements of the two $24$-cells}

\begin{lemma}
\label{lem:coset-scalar-products}
The following statements hold.
\begin{enumerate}
\item If $x,y$ belong to the same coset $\CC_r$ or the same coset $\DD_r$,
then
\[
    \ip{x}{y}\in\{-1,0,1\},
\]
and the scalar product is nonzero exactly when $\{x, -x\}=\{y, -y\}$.
\item If $r\ne s$, then
\[
    \ip{x}{y}\in\left\{-\frac12,\frac12\right\}
\]
for $x\in\CC_r,y\in\CC_s$, and likewise for
$x\in\DD_r,y\in\DD_s$.
\item If $x\in\CC$ and $y\in\DD$, then
\[
    \ip{x}{y}\in
    \left\{-\frac1{\sqrt2},0,\frac1{\sqrt2}\right\}.
\]
\end{enumerate}
\end{lemma}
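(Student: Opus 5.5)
The plan is to reduce every inner product to an inner product with an element of a fixed coset, using the invariance of $\ip{\cdot}{\cdot}$ under left and right multiplication by unit quaternions. For unit $p$ we have $\ip{px}{py}=\RePart(px\ol y\,\ol p)=\RePart(x\ol y)=\ip{x}{y}$, and similarly for right multiplication. In particular $\ip{x}{y}=\RePart(x\ol y)=\RePart(x y^{-1})$ is the real part of the quotient $xy^{-1}$. So in every case I will identify the coset in which $xy^{-1}$ lies and read off the possible real parts directly from the explicit lists of elements computed in Section~\ref{preliminaries}.

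For part (1), take $x,y\in\CC_r=\omega^r\QQ$, say $x=\omega^r a$ and $y=\omega^r b$ with $a,b\in\QQ$. By left invariance, $\ip{x}{y}=\ip{a}{b}=\RePart(a\ol b)$. Here $a\ol b\in\QQ$, and the real part of an element of $\QQ$ lies in $\{-1,0,1\}$. It is nonzero exactly when $a\ol b=\pm1$, that is, when $a=\pm b$, which is the same as $x=\pm y$. The case of $\DD_r=\tau\omega^r\QQ$ is identical, with left multiplication by $\tau\omega^r$.

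For part (2), take $x=\omega^r a$ and $y=\omega^s b$. Then $x\ol y=\omega^r a\ol b\,\omega^{-s}$. Since $\omega$ normalizes $\QQ$, this equals $\omega^{r-s}c$ for some $c\in\QQ$, so $x\ol y\in\CC_{r-s}$ with $r-s\ne0$. Every element of $\CC_1\cup\CC_2$ has real part $\pm\frac12$. For $\DD_r$ and $\DD_s$ the same computation applies. Write $x=\tau\omega^r a$ and $y=\tau\omega^s b$; then $x\ol y=\tau(\omega^r a\ol b\omega^{-s})\tau^{-1}$. This is a conjugate of an element of $\CC_{r-s}$ by $\tau$, and conjugation preserves real parts, so the value is again $\pm\frac12$.

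For part (3), let $x\in\CC$ and $y\in\DD=\tau\CC$. Then $x\ol y\in\CC\,\ol{\DD}=\CC\DD=\DD$, using that $\DD$ is closed under conjugation and the multiplication rules of the Proposition. Every element of $\DD$ has exactly two nonzero coordinates, each equal to $\pm\frac1{\sqrt2}$. Hence its real part lies in $\{-\frac1{\sqrt2},0,\frac1{\sqrt2}\}$.

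The only point requiring care is the normalization step in part (2), namely checking that $\omega^r\QQ\,\omega^{-s}=\omega^{r-s}\QQ$. This follows because $\omega^{-s}$ normalizes $\QQ$. Everything else is bookkeeping with the coset lists already established.
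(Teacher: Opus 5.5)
Your proof is correct. For parts (ii) and (iii) it takes a different route from the paper's.

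The paper proves (i) essentially as you do: left multiplication by a unit quaternion is orthogonal, and $1,\mathbf i,\mathbf j,\mathbf k$ are orthonormal. For (ii) and (iii), however, it argues from explicit coordinates:
\begin{itemize}
\item Elements of $\CC_1$ and $\CC_2$ are the half-integral quaternions with opposite sign-product parity. Pairing two of them, or one of them with an element of $\QQ$, therefore gives $\pm\frac12$.
\item The $\DD$-case of (ii) is transported by the orthogonal map $q\mapsto\tau q$.
\item Every element of $\DD$ has the form $\frac1{\sqrt2}(\pm a\pm b)$ with distinct basis units $a,b$. Its inner product with any element of $\CC$ is then visibly $0$ or $\pm\frac1{\sqrt2}$.
\end{itemize}

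You instead use the single principle $\ip{x}{y}=\RePart(x\ol y)$ and locate $x\ol y$ in a known coset. It lies in $\CC_{r-s}$ because $\omega$ normalizes $\QQ$; the index is read mod $3$, which is legitimate since $\omega^3=-1\in\QQ$. In (iii) it lies in $\DD$ because $\CC\DD=\DD$ and $\ol\DD=\DD$. You then read off real parts from the coset lists.

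What your route buys is uniformity. It shows that the possible values of $\ip{x}{y}$ depend only on which coset contains $x\ol y$. This is the same mechanism the paper later exploits in Lemma~\ref{lem:cross-incidence} through $\RePart(q_zA_{r,s})$.

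The cost is reliance on the structural facts from the Proposition: normality of $\QQ$ in $\CC$, the rule $\CC\DD=\DD$, and conjugation invariance of $\DD$. The paper's coordinate check needs only the explicit element lists. In exchange, it leaves implicit a small parity computation in (ii), namely that an odd number of sign disagreements among four coordinates gives $\pm\frac12$. Your argument sidesteps that computation entirely.
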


\begin{proof}
Every coset has the form $q\QQ$ for a unit quaternion $q$. Multiplication by $q$ is an orthogonal transformation on $\HH$, while the four elements
$1,\mathbf i,\mathbf j,\mathbf k$ of $\QQ$ are mutually orthogonal. This proves (i).

For any two  $x\in\CC_r,y\in\CC_s$,  $r\ne s$,
either both $x$ and $y$ have the form $\frac{1}{2}(\pm 1\pm {\mathbf i}\pm {\mathbf j}\pm {\mathbf k})$ with sign patterns of different parity, or one of them is of the form $\frac{1}{2}(\pm 1\pm {\mathbf i}\pm {\mathbf j}\pm {\mathbf k})$ and another is from $\QQ$.
This proves the first assertion in (ii). The assertion for $\DD$ follows
because multiplication by $\tau$ is an orthogonal transformation. 

Finally, every element
of $\tau\CC$ has the form $\frac{1}{\sqrt2}(\pm a \pm b)$ where $a,b\in \{1,\mathbf i,\mathbf j,\mathbf k\}$ are distinct, which gives (iii).
\end{proof}

For $m\in\ZZ_3$, define the nonzero linear functionals
$\ell_m:K\to\FF_2$ by
\[
\begin{aligned}
    \ell_0(x_1,x_2)&=x_2,
    \\
    \ell_1(x_1,x_2)&=x_1,
    \\
    \ell_2(x_1,x_2)&=x_1+x_2.,
\end{aligned}
\]
where elements of $K$ are identified with pairs of $\FF$-elements.
\begin{lemma}
\label{lem:cross-incidence}
Let $u,v\in K$, and let
\[
c\in\omega^r\Lift(u)\subseteq\CC_r,
\qquad
d\in\tau\omega^s\Lift(v)\subseteq\DD_s.
\]
Then
\[
\ip{c}{d}\ne0
\quad\Longleftrightarrow\quad
\ell_{r+s}(u+v+\delta_r)=1,
\]
where the index $r+s$ is taken modulo $3$.
\end{lemma}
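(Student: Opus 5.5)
The plan is to strip away the sign ambiguity and the left/right multiplications, so that the question becomes which coordinates of a single element of $\DD$ are nonzero. Write $c=\pm\omega^r q_u$ and $d=\pm\tau\omega^s q_v$; the signs do not affect whether $\ip{c}{d}$ vanishes. The inner product $\ip{p}{q}=\RePart(p\ol q)$ is invariant under left and right multiplication by unit quaternions. Multiplying both arguments on the left by $\omega^{-r}$ and on the right by $\ol{q_v}$, we get
\[
\ip{c}{d}=\pm\ip{q_u\ol{q_v}}{\,g_{r,s}},\qquad g_{r,s}\eqdef\omega^{-r}\tau\omega^{s}.
\]
Since $\QQ/\{\pm1\}=K$, we have $q_u\ol{q_v}=\pm q_{u+v}$. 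Hence $\ip{c}{d}\neq0$ exactly when the $q_{u+v}$-coordinate of $g_{r,s}$ is nonzero.

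Next I determine the support of $g_{r,s}$. Since $\omega^{-r},\omega^s\in\CC$ and $\tau\in\DD$, the multiplication rules of the Proposition give $g_{r,s}\in\DD$. So $g_{r,s}$ has exactly two nonzero coordinates, supported on a pair $\{q_a,q_b\}$. The proof of the Proposition shows that these support pairs are $\{1,\mathbf i\},\{\mathbf j,\mathbf k\}$ for $\DD_0$, then $\{1,\mathbf j\},\{\mathbf i,\mathbf k\}$ for $\DD_1$, and $\{1,\mathbf k\},\{\mathbf i,\mathbf j\}$ for $\DD_2$. In each case the pair, read in $K$, is a coset of the two-element subgroup $\{\ol1,\ol{\mathbf i}\}$, $\{\ol1,\ol{\mathbf j}\}$, or $\{\ol1,\ol{\mathbf k}\}$ respectively. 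A coset of a two-element subgroup of $K\cong\FF_2^2$ is exactly a level set $\{x:\ell(x)=\epsilon\}$ of the unique nonzero functional $\ell$ vanishing on that subgroup. Under the identification of $K$ with $\FF_2^2$ implicit in the definition of $\ell_m$, I expect $\DD_m$ to correspond to $\ell_m$ (so $\ell_0$ kills $\ol{\mathbf i}$, $\ell_1$ kills $\ol{\mathbf j}$, $\ell_2$ kills $\ol{\mathbf k}$). Fixing this identification consistently is the first point to check.

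Thus the claim reduces to two facts. First, $g_{r,s}\in\DD_{r+s}$. Second, the level of its support set under $\ell_{r+s}$ is $\ell_{r+s}(\delta_r)+1$. Given both, the condition on the $q_{u+v}$-coordinate reads $\ell_{r+s}(u+v)=\ell_{r+s}(\delta_r)+1$, which is the same as $\ell_{r+s}(u+v+\delta_r)=1$.

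For the first fact, I would use that conjugation by $\tau$ acts on the imaginary part as the signed permutation $\mathbf j\mapsto\mathbf k\mapsto-\mathbf j$. This gives $\tau\omega\tau^{-1}=\tfrac12(1+\mathbf i-\mathbf j+\mathbf k)$, whose sign product is $-1$, so it lies in $\CC_2=\omega^2\QQ$. Hence $\tau\omega^s\in\omega^{2s}\QQ\,\tau$. Since $\omega$ normalizes $\QQ$, this yields $g_{r,s}\in\omega^{2s-r}\QQ\tau=\omega^{2s-r}\tau\QQ'$ for a suitable conjugate. Pinning down the exact $\DD$-index this way is fiddly, because $\tau\omega^m\QQ=\DD_m$ is left-coset information while $\omega^{-r}$ multiplies on the left.

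The main obstacle is therefore the bookkeeping of the second fact, which requires the actual sign/support data rather than just the coset. Rather than attempt a slick argument, I would simply compute the nine quaternions $g_{r,s}=\omega^{-r}\tau\omega^s$ for $r,s\in\ZZ_3$ explicitly; each is a short product of half-integral quaternions. For each, I read off its support pair and check it against the predicted $\{x:\ell_{r+s}(x+\delta_r)=1\}$ with $\delta_0=\ol{\mathbf j},\delta_1=\ol{\mathbf i},\delta_2=\ol{\mathbf k}$. The computation can be cut down by the symmetry of conjugation by $\omega$, which permutes $\mathbf i,\mathbf j,\mathbf k$ cyclically and thus cycles both the cosets $\DD_m$ and the functionals $\ell_m$. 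After verifying the row $r=0$ directly, the remaining rows would follow by applying this cyclic relabelling. I would nevertheless spot-check them numerically, since the choice of $\delta_r$ is precisely what makes the formula consistent.
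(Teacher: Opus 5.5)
Your proposal is correct and follows essentially the paper's route: your $g_{r,s}=\omega^{-r}\tau\omega^{s}$ is exactly $\overline{A_{r,s}}$ for the paper's $A_{r,s}=\ol{\omega}^{\,s}\ol{\tau}\omega^{r}$, so it has the same support. The paper likewise tabulates the nine products and matches their supports against $\{z:\ell_{r+s}(z+\delta_r)=1\}$, using the identification $\ol{\mathbf i}=(1,0)$, $\ol{\mathbf j}=(0,1)$, $\ol{\mathbf k}=(1,1)$ that you anticipated. Your cyclic shortcut is also sound: $g_{r,s}=\omega^{-1}g_{r-1,s-1}\omega$, and this conjugation sends $\delta_{r-1}\mapsto\delta_r$ and $\ker\ell_m\mapsto\ker\ell_{m+2}$. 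So checking the row $r=0$ suffices, namely $\tau$, $\tau\omega=(\mathbf i+\mathbf k)/\sqrt2$ and $\tau\omega^2=(-1+\mathbf k)/\sqrt2$.
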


\begin{proof}
Changing either sign changes only the sign of the scalar product, not whether
it is zero. Hence
\[
    \ip{c}{d}\ne0
    \quad\Longleftrightarrow\quad
    \ip{\omega^r q_u}{\tau\omega^s q_v}\ne0.
\]

We have
\[
\begin{aligned}
    \ip{\omega^r q_u}{\tau\omega^s q_v}
    &=
    \RePart\!\left(
        \omega^r q_u
        \ol{q_v}\,
        \ol{\omega}^{\,s}
        \ol{\tau}
    \right).
\end{aligned}
\]
The addition operation in $K=\QQ/\{\pm1\}$ is induced by the multiplication in $\QQ$.  Therefore, the factor-class of
$q_u\ol{q_v}$ is $u+v$.
Consequently there is a sign
$\sigma(u,v)\in\{+1,-1\}$ such that
$q_u\ol{q_v}
    =
    \sigma(u,v)\,q_{u+v}$.
Again this sign does not affect vanishing of the scalar product.  Set
$z=u+v$.
Then
\[
    \ip{\omega^r q_u}{\tau\omega^s q_v}\ne0
    \quad\Longleftrightarrow\quad
    \RePart\!\left(
        \omega^r q_z
        \ol{\omega}^{\,s}
        \ol{\tau}
    \right)\ne0.
\]
Using $\RePart(ab)=\RePart(ba)$, we obtain
\[
    \RePart\!\left(
        \omega^r q_z
        \ol{\omega}^{\,s}
        \ol{\tau}
    \right)
    =
    \RePart\!\left(q_z A_{r,s}\right),
    \qquad
    A_{r,s}\eqdef
    \ol{\omega}^{\,s}\ol{\tau}\omega^r.
\]
We now compute the nine quaternions $A_{r,s}$.  From
\[
    \omega=\frac{1+\mathbf i+\mathbf j+\mathbf k}{2},
    \qquad
    \tau=\frac{1+\mathbf i}{\sqrt2},
\]
we have
\[
\begin{aligned}
    \omega^0&=1,\\
    \omega&=\frac{1+\mathbf i+\mathbf j+\mathbf k}{2},\\
    \omega^2&=\frac{-1+\mathbf i+\mathbf j+\mathbf k}{2},
\end{aligned}
\qquad
\begin{aligned}
    \ol{\omega}^{\,0}&=1,\\
    \ol{\omega}&=\frac{1-\mathbf i-\mathbf j-\mathbf k}{2},\\
    \ol{\omega}^{\,2}&=\frac{-1-\mathbf i-\mathbf j-\mathbf k}{2},
\end{aligned}
\]
and
\[
    \ol{\tau}=\frac{1-\mathbf i}{\sqrt2}.
\]
Thus
\[
    \sqrt2\,A_{r,s}
    =
    \ol{\omega}^{\,s}(1-\mathbf i)\omega^r.
\]
The nine products are
\[
\begin{array}{c|ccc}
 & s=0 & s=1 & s=2\\ \hline
r=0
&
1-\mathbf i
&
-\mathbf i-\mathbf k
&
-1-\mathbf k
\\[1mm]
r=1
&
1+\mathbf j
&
1-\mathbf k
&
-\mathbf j-\mathbf k
\\[1mm]
r=2
&
\mathbf i+\mathbf j
&
1+\mathbf i
&
1-\mathbf j
\end{array}
\]
where every entry in the table is $\sqrt2\,A_{r,s}$.

It remains to read off the real parts of $q_zA_{r,s}$.  Since
$q_{(0,0)}=1$, $q_{(1,0)}=\mathbf i$, $q_{(0,1)}=\mathbf j$, $q_{(1,1)}=\mathbf k$, the previous table immediately gives all four possibilities for $z$
for each pair $(r,s)$:
\[
\begin{array}{c|rrrr|c}
(r,s)
&
z=(0,0)
&
z=(1,0)
&
z=(0,1)
&
z=(1,1)
&
\{z:\RePart(q_zA_{r,s})\ne0\}
\\ \hline
(0,0)&  1&  1&  0&  0&
\{(0,0),(1,0)\}\\
(0,1)&  0&  1&  0&  1&
\{(1,0),(1,1)\}\\
(0,2)& -1&  0&  0&  1&
\{(0,0),(1,1)\}\\
(1,0)&  1&  0& -1&  0&
\{(0,0),(0,1)\}\\
(1,1)&  1&  0&  0&  1&
\{(0,0),(1,1)\}\\
(1,2)&  0&  0&  1&  1&
\{(0,1),(1,1)\}\\
(2,0)&  0& -1& -1&  0&
\{(1,0),(0,1)\}\\
(2,1)&  1& -1&  0&  0&
\{(0,0),(1,0)\}\\
(2,2)&  1&  0&  1&  0&
\{(0,0),(0,1)\}
\end{array}
\]
The numerical entries in this table are the values of
$\sqrt2\,\RePart(q_zA_{r,s})$. Recall that only their vanishing is important.

We now compare the last column of the last table with the binary condition in the
statement.  Write $z=(z_1,z_2)$. Recall that
\[
    \delta_0=(0,1),\qquad
    \delta_1=(1,0),\qquad
    \delta_2=(1,1),
\]
and
\[
    \ell_0(z_1,z_2)=z_2,\qquad
    \ell_1(z_1,z_2)=z_1,\qquad
    \ell_2(z_1,z_2)=z_1+z_2.
\]
All additions in the following table are in $\FF_2$:
\[
\begin{array}{c|c|c|c}
(r,s)
&
r+s
&
\ell_{r+s}(z+\delta_r)
&
\{z\mid \ell_{r+s}(z+\delta_r)=1\}
\\ \hline
(0,0)&0&z_2+1&
\{(0,0),(1,0)\}\\
(0,1)&1&z_1&
\{(1,0),(1,1)\}\\
(0,2)&2&z_1+z_2+1&
\{(0,0),(1,1)\}\\
(1,0)&1&z_1+1&
\{(0,0),(0,1)\}\\
(1,1)&2&z_1+z_2+1&
\{(0,0),(1,1)\}\\
(1,2)&0&z_2&
\{(0,1),(1,1)\}\\
(2,0)&2&z_1+z_2&
\{(1,0),(0,1)\}\\
(2,1)&0&z_2+1&
\{(0,0),(1,0)\}\\
(2,2)&1&z_1+1&
\{(0,0),(0,1)\}.
\end{array}
\tag{****}
\]
The last columns of tables agree in every one of the nine
cases. Hence, for $z=u+v$,
\[
    \RePart(q_zA_{r,s})\ne0
    \quad\Longleftrightarrow\quad
    \ell_{r+s}(z+\delta_r)=1.
\]
Substituting $z=u+v$ gives
\[
    \ip{c}{d}\ne0
    \quad\Longleftrightarrow\quad
    \ell_{r+s}(u+v+\delta_r)=1,
\]
which proves the lemma.
\end{proof}

\begin{lemma}
\label{lem:automatic-parity}
Let
\[
    (a,b,c)\in\Tau_{1,r},
    \qquad
    (a',b',c')\in\Tau_{2,s}.
\]
Then among
\[
    \ip{a}{a'},
    \qquad
    \ip{b}{b'},
    \qquad
    \ip{c}{c'},
\]
either all zeros or exactly one is zero.
\end{lemma}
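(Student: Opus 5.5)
The approach is to reduce all three scalar products to the binary criterion of Lemma~\ref{lem:cross-incidence} and then use linearity of $\ell_{r+s}$ over $\FF_2$. The three criteria will turn out to be values of one linear functional at three arguments that sum to zero. So the number of nonvanishing scalar products is even, hence $0$ or $2$, which is exactly the claim.

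First I unpack the definitions. For $(a,b,c)\in\Tau_{1,r}$ there are $(u,v,w)\in\BB_0$, so $u+v+w=0$, with
\[
a\in\tau\omega^r\Lift(u)\subseteq\DD_r,\qquad b\in\omega^r\Lift(v)\subseteq\CC_r,\qquad c\in\omega^r\Lift(w)\subseteq\CC_r .
\]
For $(a',b',c')\in\Tau_{2,s}$ there are $(u',v',w')\in\BB_{\delta_s}$, so $u'+v'+w'=\delta_s$, with
\[
a'\in\omega^s\Lift(u')\subseteq\CC_s,\qquad b'\in\tau\omega^s\Lift(v')\subseteq\DD_s,\qquad c'\in\tau\omega^s\Lift(w')\subseteq\DD_s .
\]
Each of the three pairs therefore consists of one element of $\CC$ and one element of $\DD$, so Lemma~\ref{lem:cross-incidence} applies to each. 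I use the symmetry $\ip{x}{y}=\ip{y}{x}$ to put the $\CC$-element in the first slot.

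Next I apply the lemma pair by pair. The step that needs care is which coset index plays the role of $r$ in the lemma, since that index determines the shift $\delta$. For $\ip{a'}{a}$ the $\CC$-element $a'$ lies in $\CC_s$ and $a$ lies in $\DD_r$. Hence $\ip{a}{a'}\ne0$ if and only if $\ell_{s+r}(u'+u+\delta_s)=1$. For $\ip{b}{b'}$ the element $b$ lies in $\CC_r$ and $b'$ lies in $\DD_s$, so the condition is $\ell_{r+s}(v+v'+\delta_r)=1$. Likewise $\ip{c}{c'}\ne0$ if and only if $\ell_{r+s}(w+w'+\delta_r)=1$. All three conditions involve the same functional $\ell_{r+s}$, because addition in $\ZZ_3$ is commutative.

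Finally I add the three arguments in $K\cong\FF_2^2$:
\[
(u+v+w)+(u'+v'+w')+\delta_s+2\delta_r=0+\delta_s+\delta_s+0=0 .
\]
By linearity of $\ell_{r+s}$, the three indicator bits sum to $0$ in $\FF_2$. So the number of nonzero scalar products among $\ip{a}{a'},\ip{b}{b'},\ip{c}{c'}$ is even, i.e.\ $0$ or $2$. Equivalently, either all three are zero or exactly one is zero.

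I expect the only real obstacle to be the bookkeeping in the second step. One must keep track of which factor contributes the $\DD$-element and therefore whether the shift is $\delta_r$ or $\delta_s$. This asymmetry is exactly what makes the $\delta$'s cancel against the affine shift $\delta_s$ of $\BB_{\delta_s}$.
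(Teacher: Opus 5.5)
Your proof is correct and follows essentially the same route as the paper. You apply Lemma~\ref{lem:cross-incidence} to each component pair, with shift $\delta_s$ in the first component and $\delta_r$ in the other two, and then use linearity of $\ell_{r+s}$ together with $u+v+w=0$ and $u'+v'+w'=\delta_s$ to show that the three incidence bits sum to $0$ in $\FF_2$.
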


\begin{proof}
Let
\[
    (u,v,w)\in\BB_0,
    \qquad
    (u',v',w')\in\BB_{\delta_s}
\]
be such that $a\in \tau\omega^r(\mathcal{L}(u))$, $b\in \omega^r(\mathcal{L}(v))$, $c\in \omega^r(\mathcal{L}(w))$ and $a'\in \omega^s(\mathcal{L}(u'))$, $b'\in \tau\omega^s(\mathcal{L}(v'))$, $c'\in \tau\omega^s(\mathcal{L}(w'))$. Thus
\[
    u+v+w=0,
    \qquad
    u'+v'+w'=\delta_s.
\]
We have $a\in \DD_r$ and 
$a'\in\CC_s$. By Lemma~\ref{lem:cross-incidence}, its incidence bit is
\[
    \ell_{r+s}(u+u'+\delta_s).
\]
The second and third components compare elements of $\CC_r$ with elements of
$\DD_s$, and their incidence bits are
\[
    \ell_{r+s}(v+v'+\delta_r),
    \qquad
    \ell_{r+s}(w+w'+\delta_r).
\]
The parity of the number of nonzero component scalar products is therefore
\[
\begin{aligned}
    &\ell_{r+s}(u+u'+\delta_s)
    +\ell_{r+s}(v+v'+\delta_r)
    +\ell_{r+s}(w+w'+\delta_r)
    \\
    &\quad=
    \ell_{r+s}\!\left(
       (u+v+w)+(u'+v'+w')+\delta_s
    \right)
  =
    \ell_{r+s}(0+\delta_s+\delta_s)
    =0
\end{aligned}
\]
in $\FF_2$. Since there are
only three components, their number of nonzero scalar products is either
$0$ or $2$.
\end{proof}

\subsection{Final steps of the proof}

\begin{lemma}
\label{lem:basic-properties}
The set $\XX_0$ consists of $840$ unit vectors and is antipodal.
\end{lemma}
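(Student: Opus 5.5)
We verify three things in turn: every point of $\XX_0$ is a unit vector, the five families are pairwise disjoint with the expected sizes (so $|\XX_0|=3\cdot 24+2\cdot 384=840$), and $\XX_0=-\XX_0$.

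\emph{Norms.} For $\EE_j$ this is immediate, since every element of $\CC$ is a unit quaternion. For $\MM_1$, a point is $\bigl(\tfrac a{\sqrt2},\tfrac b2,\tfrac c2\bigr)$ with $(a,b,c)\in\Tau_{1,r}\subseteq\DD_r\times\CC_r\times\CC_r$. Each coordinate of $(a,b,c)$ is a unit quaternion, being a unit quaternion times an element of $\QQ$. Hence the squared norm is $\tfrac12+\tfrac14+\tfrac14=1$. The same computation applies to $\MM_2$, as already noted in the Remark after Theorem~\ref{thm:canonical-kissing}.

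\emph{Cardinality and disjointness.} We have $|\EE_j|=24$, and by Lemma~\ref{prop:tau-cardinality} $|\Tau_1|=|\Tau_2|=384$. The rescalings $\Tau_i\to\MM_i$ are injective (componentwise scalings by nonzero constants), so $|\MM_i|=384$. Disjointness follows from supports and weight profiles:
\begin{itemize}
\item the axial families have exactly one nonzero quaternionic coordinate, so they are pairwise disjoint;
\item every point of $\MM_1\cup\MM_2$ has all three coordinates nonzero, so the mixed families avoid the axial ones;
\item $\MM_1$ and $\MM_2$ are disjoint because their first coordinates have norms $\tfrac1{\sqrt2}$ and $\tfrac12$ respectively.
\end{itemize}
Within $\Tau_1$, the union over $r$ is disjoint because the sets $\DD_r\times\CC_r\times\CC_r$ are disjoint, the cosets $\CC_r$ being distinct. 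Inside each $\Tau_{1,r}$, left multiplication by fixed units is injective. Finally, $|\widehat\BB_\eta|=128$ holds because distinct triples in $\BB_\eta$ lift to disjoint sets: the lifts of distinct elements of $K$ are distinct classes $\{\pm q_u\}$. The same reasoning applies to $\Tau_2$.

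\emph{Antipodality.} $\CC$ is closed under negation, so each $\EE_j$ is antipodal. For the mixed families it suffices to show $\widehat\BB_\eta=-\widehat\BB_\eta$ in $\QQ^3$, since the maps $(r_1,r_2,r_3)[\cdot]$ and the weightings are linear. This holds because each $\Lift(u)=\{q_u,-q_u\}$ is symmetric and the lift is a full product $\Lift(u)\times\Lift(v)\times\Lift(w)$. Negating all three coordinates therefore stays inside the same product; indeed, any sign change stays inside it.

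None of these steps is difficult. The only point requiring care is the injectivity of the sign lift, which is needed to confirm the count $128$ and hence the total of $840$ distinct points. I would state it explicitly: the preimage classes $\{\pm q_u\}$, $u\in K$, partition $\QQ$, so the lifted triples over distinct $(u,v,w)$ are disjoint.
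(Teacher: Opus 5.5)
Your proof is correct and follows the same route as the paper's: count $72+384+384$ using Lemma~\ref{prop:tau-cardinality}, note that the five families are disjoint and consist of unit vectors, and deduce $\XX_0=-\XX_0$ from the full sign symmetry of each lift $\widehat{\BB}_\eta$. The paper simply asserts disjointness ``by construction''; you spell it out via supports, first-coordinate norms, and injectivity of the sign lift.
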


\begin{proof}
The three axial families have total size $72$. By
Lemma~\ref{prop:tau-cardinality}, each mixed family has size $384$.
By construction, the five families are disjoint. Hence
$|\XX_0|=72+384+384=840$. 
Their elements have norm one.
Every lift $\widehat{\BB}_\eta$ contains all possible sign choices, so
$\XX_0=-\XX_0$.
\end{proof}

\begin{lemma}
\label{lem:axial-pairs}
If $x,y\in\XX_0$ are distinct and at least one belongs to
$\EE_1\cup\EE_2\cup\EE_3$, then
\[
    \ip{x}{y}\le\frac12.
\]
\end{lemma}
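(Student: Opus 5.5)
Suppose $x\in\EE_j$, say $x$ has the single nonzero quaternionic coordinate $c\in\CC$ in position $j$. For any $y=(y_1,y_2,y_3)\in\XX_0$ the product reduces to one coordinate:
\[
\ip{x}{y}=\ip{c}{y_j}.
\]
We split into cases according to the family containing $y$.

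\textbf{Axial families.} If $y\in\EE_{j'}$ with $j'\neq j$, then $\ip{x}{y}=0$. If $y\in\EE_j$, then $y=(0,\dots,c',\dots,0)$ with $c'\in\CC$, $c'\neq c$. The regular $24$-cell $\CC$ is a kissing arrangement in $\HH$, and Lemma~\ref{lem:coset-scalar-products} gives this directly. Indeed, $\ip{c}{c'}\in\{-1,0\}$ when $c,c'$ lie in the same coset $\CC_r$ (value $1$ would force $c=c'$). It lies in $\{\pm\frac12\}$ when they lie in different cosets. Either way $\ip{x}{y}\le\frac12$.

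\textbf{Mixed families.} If $y\in\MM_1\cup\MM_2$, then $y_j=\lambda g$, where $g\in\CC\cup\DD=2O$ is a unit quaternion and $\lambda\in\{\frac12,\frac1{\sqrt2}\}$ is the weight of the $j$-th factor. We have $\lambda=\frac1{\sqrt2}$ exactly for $(j,y)\in\{(1,\MM_1),(2,\MM_2)\}$, and $\lambda=\frac12$ otherwise. When $\lambda=\frac12$, Cauchy--Schwarz already gives $\ip{x}{y}\le\frac12|c||g|=\frac12$, so no structure is needed.

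\textbf{Remaining case.} Assume $\lambda=\frac1{\sqrt2}$. The construction gives $g\in\DD_r$: for $\MM_{1,r}$ the first coordinate is $\tau\omega^r\Lift(u)\subseteq\DD_r$, and for $\MM_{2,r}$ the second coordinate is $\tau\omega^r\Lift(v)\subseteq\DD_r$. Since $c\in\CC$, part (iii) of Lemma~\ref{lem:coset-scalar-products} gives $\ip{c}{g}\in\{-\frac1{\sqrt2},0,\frac1{\sqrt2}\}$. Hence
\[
\ip{x}{y}=\tfrac1{\sqrt2}\ip{c}{g}\in\left\{-\tfrac12,0,\tfrac12\right\},
\]
which proves the bound.

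The lemma is symmetric in $x,y$, so the case $y$ axial and $x$ mixed is identical. The only nontrivial input is the $\CC$-versus-$\DD$ inner product in Lemma~\ref{lem:coset-scalar-products}(iii). The one point to be careful about is that in $\MM_1$ and $\MM_2$ the heavily weighted coordinate always lies in $\DD$ rather than $\CC$; otherwise one could reach the value $\frac1{\sqrt2}$. Once this is read off from the definitions of $\Tau_{1,r}$ and $\Tau_{2,r}$, there is no real obstacle.
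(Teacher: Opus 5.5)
Your proof is correct and follows essentially the same route as the paper: orthogonality between distinct axial families, the $24$-cell bound within one family, Cauchy--Schwarz for the coordinates of weight $\frac12$, and part (iii) of Lemma~\ref{lem:coset-scalar-products} for the weight-$\frac1{\sqrt2}$ coordinate, which always lies in $\DD_r$. The only difference is that you derive the within-$24$-cell bound from parts (i)--(ii) of that lemma instead of quoting it as a known property of the $24$-cell, which makes your argument slightly more self-contained.
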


\begin{proof} Recall that $24$-cell is the only possible (up to isometry) kissing arrangement of size $24$ in ${\mathbb R}^4$.
Thus, within one of $\EE_s$, the assertion is the usual scalar-product bound for
the regular $24$-cell. By construction, different $\EE_s$ and $\EE_{s'}$ are orthogonal.

Let us now look at the interation between vectors of $\EE_1\cup\EE_2\cup\EE_3$ and the mixed families. 
For example, let $x=(c_0,0,0)\in\EE_1$. Its scalar product with a point of
$\MM_1$ is
\[
    \frac1{\sqrt2}\ip{c_0}{a}
    \in\left\{-\frac12,0,\frac12\right\},
\]
by part (iii) of Lemma~\ref{lem:coset-scalar-products}. Its scalar product with a
point of $\MM_2$ is
\[
    \frac12\ip{c_0}{a'}\le\frac12.
\]
The argument for $\EE_2$ is symmetric. For $\EE_3$, the two corresponding upper
bounds are $\frac12$ and $\frac{1}{2\sqrt2}$.
\end{proof}
Now we may concentrate completely on mixed families.
\begin{lemma}
\label{lem:within-mixed}
Distinct points of $\MM_1$ have scalar product at most $\frac12$. The same is
true for $\MM_2$.
\end{lemma}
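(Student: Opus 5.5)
We must show that distinct points of $\MM_1$ have scalar product at most $\frac12$, and likewise for $\MM_2$. Take $x=(a/\sqrt2,b/2,c/2)$ and $y=(a'/\sqrt2,b'/2,c'/2)$ with $(a,b,c),(a',b',c')\in\Tau_1$. Then
\[
\ip{x}{y}=\tfrac12\ip{a}{a'}+\tfrac14\ip{b}{b'}+\tfrac14\ip{c}{c'}.
\]
We split into cases according to the indices $r,s$ with $(a,b,c)\in\Tau_{1,r}$ and $(a',b',c')\in\Tau_{1,s}$.

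\emph{Case $r\neq s$.} Here $a\in\DD_r$, $a'\in\DD_s$, $b,c\in\CC_r$ and $b',c'\in\CC_s$. By part (ii) of Lemma~\ref{lem:coset-scalar-products}, every component product equals $\pm\frac12$. Hence
\[
\ip{x}{y}\le \tfrac14+\tfrac18+\tfrac18=\tfrac12 .
\]

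\emph{Case $r=s$.} All three pairs of components lie in a common coset. Write the labels as $(u,v,w),(u',v',w')\in\BB_0$. By part (i) of Lemma~\ref{lem:coset-scalar-products}, the component product $\ip{a}{a'}$ is nonzero (and then equal to $\pm1$) exactly when $u=u'$, and similarly for the other two components. Since $u+v+w=0=u'+v'+w'$, two coincident label coordinates force the third to coincide. So the number of coincident coordinates is $0$, $1$ or $3$. We treat each possibility.

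\begin{itemize}
\item If no coordinate coincides, then $\ip{x}{y}=0$.
\item If exactly one coordinate coincides, then $\ip{x}{y}\le\frac12$, since the largest weight is $\frac12$.
\item If all three coordinates coincide, then $a'=\pm a$, $b'=\pm b$ and $c'=\pm c$, so $\ip{x}{y}=\frac12\epsilon_1+\frac14\epsilon_2+\frac14\epsilon_3$ with signs $\epsilon_i\in\{\pm1\}$. The value is $1$ only when $y=x$. Every other sign pattern gives at most $\frac12$, the worst being $\frac12+\frac14-\frac14$.
\end{itemize}

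For $\MM_2$ the argument is identical with the weights permuted to $(\frac14,\frac12,\frac14)$. The affine label sets $\BB_{\delta_r}$ still satisfy the property that two coincident coordinates force the third, because two elements of the same translate differ by an element of $\BB_0$. The only point needing care is that in the case $r\neq s$ the labels come from different translates $\BB_{\delta_r}$ and $\BB_{\delta_s}$. This is harmless, since part (ii) of Lemma~\ref{lem:coset-scalar-products} does not use the labels at all.

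I expect no real obstacle here. The one subtle step is the counting in the case $r=s$: one must check that part (i) of Lemma~\ref{lem:coset-scalar-products} gives nonzero products exactly when the label classes in $K$ agree. This holds because $\omega^r\Lift(u)$ is precisely the antipodal pair $\{\pm\omega^r q_u\}$, and left multiplication by $\omega^r$ or $\tau\omega^r$ is orthogonal.
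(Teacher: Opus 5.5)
Your proof is correct and follows essentially the same route as the paper. The case $r\neq s$ is bounded by $\frac12\cdot\frac12+\frac14\cdot\frac12+\frac14\cdot\frac12$ via part (ii) of Lemma~\ref{lem:coset-scalar-products}. The case $r=s$ combines part (i) with the fact that two elements of $\BB_\eta$ agreeing in two coordinates agree in the third, and handles equal labels with different signs by the sign-flip bound $1-2\cdot\frac14=\frac12$.
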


\begin{proof}
Consider two points arising from the same $\Tau_{1,r}$ or $\Tau_{2,r}$. Elements of either set are triples obtained by applying componentwise quaternionic rotations to sign-lifted elements of $\mathcal K$, together with an additional weighting of the three components.

Before weighting, each pair of corresponding components belongs to a rotated copy $q\QQ$, with $|q|=1$. Hence their scalar product lies in $\{-1,0,1\}$,
and is nonzero exactly when the two components coincide up to sign. In that case, the scalar product is $1$ or $-1$, according as the corresponding sign lifts are equal or antipodal.

Now every affine subspace
$\BB_\eta=\{(u,v,w)\mid u+v+w=\eta\}$
has the following property: for any $a,b\in\BB_\eta$, equality in any two components forces equality in the third. Consequently, two distinct elements $a,b\in\BB_\eta$ can agree in at most one component.

The three weights entering the total scalar product are
\[
\frac12,\qquad \frac14,\qquad \frac14,
\]
possibly in a different order. Therefore, for two distinct sign-lifted elements $a,b\in\BB_\eta$, after the corresponding quaternionic rotations and weighting, at most one component can contribute positively to the scalar product. Since the largest possible weight is $\frac12$, their total scalar product is at most
$\frac12$.

If $a=b$ but the sign lifts are distinct, at least
one component sign changes. Changing a sign in a component of weight $q$
reduces the scalar product from $1$ by $2q$. The smallest weight is $\frac14$,
so the largest value between distinct sign lifts is $1-2\cdot\frac14=\frac12$.

Finally, if the two triples belong to different $\Tau_{1,r}$, $\Tau_{1,s}$ (or,  $\Tau_{2,r}$, $\Tau_{2,s}$), $r\ne s$, then in all
three components they belong to distinct quaternionic cosets. By part (ii) of
Lemma~\ref{lem:coset-scalar-products}, each component scalar product is
at most $\frac12$, and therefore
\[
    \ip{x}{y}
    \le
    \frac12\cdot\frac12
    +\frac14\cdot\frac12
    +\frac14\cdot\frac12
    =\frac12.
\]
\end{proof}
Interaction between the two mixed families is handled by the following lemma.
\begin{lemma}
\label{lem:cross-mixed}
If $x\in\MM_1$ and $y\in\MM_2$, then
\[
    \ip{x}{y}\le\frac12.
\]
\end{lemma}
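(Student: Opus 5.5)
The plan is a direct computation, reducing the claim to the parity statement of Lemma~\ref{lem:automatic-parity} together with part (iii) of Lemma~\ref{lem:coset-scalar-products}. Write $x=\bigl(\frac{a}{\sqrt2},\frac b2,\frac c2\bigr)$ with $(a,b,c)\in\Tau_{1,r}$, so that $a\in\DD_r$ and $b,c\in\CC_r$. Write $y=\bigl(\frac{a'}{2},\frac{b'}{\sqrt2},\frac{c'}{2}\bigr)$ with $(a',b',c')\in\Tau_{2,s}$, so that $a'\in\CC_s$ and $b',c'\in\DD_s$. Then
\[
\ip{x}{y}=\frac{1}{2\sqrt2}\ip{a}{a'}+\frac{1}{2\sqrt2}\ip{b}{b'}+\frac14\ip{c}{c'}.
\]

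In each of the three components, one entry lies in $\CC$ and the other in $\DD$. Hence, by part (iii) of Lemma~\ref{lem:coset-scalar-products}, each component scalar product lies in $\{-\frac1{\sqrt2},0,\frac1{\sqrt2}\}$. Consequently the three summands are bounded above by
\[
\frac14,\qquad \frac14,\qquad \frac{1}{4\sqrt2}
\]
respectively. If all three could be nonzero simultaneously, the sum would be bounded only by $\frac12+\frac1{4\sqrt2}$, which is too large.

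This is exactly where Lemma~\ref{lem:automatic-parity} enters, and it is the only nontrivial step. It says that among $\ip{a}{a'},\ip{b}{b'},\ip{c}{c'}$ either all vanish or exactly one vanishes, so the number of nonzero component products is $0$ or $2$.

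\begin{itemize}
\item In the case of $0$ nonzero products, $\ip{x}{y}=0$.
\item In the case of $2$ nonzero products, $\ip{x}{y}$ is at most the sum of the two largest upper bounds, namely $\frac14+\frac14=\frac12$.
\end{itemize}

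Hence $\ip{x}{y}\le\frac12$ in all cases. This holds for every pair $r,s\in\ZZ_3$, so no further case split is needed; the whole burden lies in the already-proved incidence computation of Lemma~\ref{lem:cross-incidence}, which fed the parity lemma.
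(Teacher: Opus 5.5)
Your proposal is correct and follows the paper's proof essentially verbatim. It uses the same componentwise expansion, the fact from part (iii) of Lemma~\ref{lem:coset-scalar-products} that each component product lies in $\{0,\pm\frac1{\sqrt2}\}$, and the parity Lemma~\ref{lem:automatic-parity} to reduce to $0$ or $2$ nonzero terms. The only cosmetic difference is that you bound the two-nonzero case uniformly by the two largest summand bounds $\frac14+\frac14$, whereas the paper splits it into subcases giving $\frac12$ and $\frac{2+\sqrt2}{8}<\frac12$.
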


\begin{proof}
By construction, we have
\[
    x=\left(\frac a{\sqrt2},\frac b2,\frac c2\right),
    \qquad
    y=\left(\frac{a'}2,\frac{b'}{\sqrt2},\frac{c'}2\right).
\]
Then
\[
    \ip{x}{y}
    =
    \frac1{2\sqrt2}\ip{a}{a'}
    +\frac1{2\sqrt2}\ip{b}{b'}
    +\frac14\ip{c}{c'}.
\]
Every component scalar product is zero or has absolute value $1/\sqrt2$.
By Lemma~\ref{lem:automatic-parity}, either all three vanish or exactly two
are nonzero. If the first two are nonzero, then one of corresponding components should be from  $\mathcal C$ and another from $\mathcal D$, and by part (iii) of
Lemma~\ref{lem:coset-scalar-products}, we have
\[
    \ip{x}{y}\le \frac1{2\sqrt2}\frac1{\sqrt2}
    +\frac1{2\sqrt2} \frac1{\sqrt2}=\frac14+\frac14=\frac12.
\]
If the third and exactly one of the first two are nonzero, then
\[
    \ip{x}{y}
    \le 
    \frac14+\frac1{4\sqrt2}
    =\frac{2+\sqrt2}{8}
    <\frac12.
\]
\end{proof}

\begin{proof}[Proof of Theorem~\ref{thm:canonical-kissing}]
The cardinality, unit norms, and antipodality follow from
Lemma~\ref{lem:basic-properties}. Every pair of distinct points either
involves an axial point, lies within one mixed family, or has one point in
each mixed family. The three cases are covered by
Lemmas~\ref{lem:axial-pairs}, \ref{lem:within-mixed}, and
\ref{lem:cross-mixed}, respectively.
\end{proof}

\section*{Distinction from previous $840$-point arrangements}
\label{sec:distinction-previous-840}

The quaternionic construction introduced in Section~\ref{sec:canonical-arrangement} is not isometric to any of the previously known
$840$-point kissing arrangements in $\RR^{12}$.  There are two broad families of such arrangements from which we need to distinguish it. We first separate it from all $1579$ arrangements obtained from
maximum independent sets of the signed Johnson's graph $J_{\pm}(12,4)$~\cite{takhanov2026classificationindependentsetssigned}.  Second we separate it from the
continuous family of Clebsch-based arrangements obtained by varying the two
$48$-systems in the construction of~\cite{takhanov2026structurekissingarrangementsmathbb}.

\subsection{Distinction from $1579$ arrangements from~\cite{takhanov2026classificationindependentsetssigned}}

For a finite spherical configuration, by its \emph{inner-product profile} we
mean the multiplicity of each inner product among unordered pairs of distinct
points.  A direct exact computation from the quaternionic construction gives
the following profile.

\begin{center}
\begin{tabular}{c r}
\toprule
inner product & number of unordered pairs \\
\midrule
$-1$ & $420$ \\
$-\frac12$ & $32928$ \\
$-\frac{2+\sqrt2}{8}$ & $18432$ \\
$-\frac{\sqrt2}{4}$ & $2304$ \\
$-\frac14$ & $43008$ \\
$-\frac{2-\sqrt2}{8}$ & $18432$ \\
$0$ & $121752$ \\
$\frac{2-\sqrt2}{8}$ & $18432$ \\
$\frac14$ & $43008$ \\
$\frac{\sqrt2}{4}$ & $2304$ \\
$\frac{2+\sqrt2}{8}$ & $18432$ \\
$\frac12$ & $32928$ \\
\bottomrule
\end{tabular}
\end{center}
In fact, the latter table is not needed for our argument --- it only matters that the profile contains the irrational inner
product $\frac{2+\sqrt2}{8}$. The latter can be seen from the proof of Lemma~\ref{lem:cross-mixed}.

This immediately distinguishes the quaternionic arrangement from all the
$1579$ arrangements classified in~\cite{takhanov2026classificationindependentsetssigned}.  Indeed, those
arrangements arise from maximum independent sets in $J_{\pm}(12,4)$.  Before
normalization, every vertex of $J_{\pm}(12,4)$ has exactly four nonzero
coordinates, each equal to $\pm1$, and the corresponding $840$-point kissing
arrangement is obtained by appending the $24$ vectors $\pm2e_i$.  After
normalizing, every inner product between two distinct points belongs to
\[
\left\{-1,-\frac34,-\frac12,-\frac14,0,
             \frac14,\frac12\right\}.
\]
Thus, none of the $1579$ arrangements can have the profile containing $\frac{2+\sqrt2}{8}$.

\subsection{Distinction from Clebsch-based arrangements}

We next compare the quaternionic construction with the Clebsch-based family
from~\cite{takhanov2026structurekissingarrangementsmathbb}.  We recall only the part of that construction
needed here.  For the present purpose, a \emph{$48$-system}
is a set of $48$ unit vectors $v\in\mathbb R^6$ such that distinct vectors have
inner product at most $\frac12$ and
\[
|v_r|\le \frac12\qquad (r=1,\ldots,6).
\]
These are the only properties of the $48$-systems that will be used here.  In
particular, adjoining the twelve signed coordinate vectors
$\pm e_1,\ldots,\pm e_6$ gives a $60$-point kissing configuration in
$\mathbb R^6$. 

Take two such  $60$-point kissing configurations and place them in the two
orthogonal factors of
$\mathbb R^{12}=\mathbb R^6\oplus\mathbb R^6$. 
Then add the fixed set of $720$ bridge vectors, i.e. vectors of the form
\[
\frac12\bigl(\varepsilon_i e_i+\varepsilon_j e_j+
\varepsilon_k e_{6+k}+\varepsilon_\ell e_{6+\ell}\bigr),
\qquad
\varepsilon_i,\varepsilon_j,\varepsilon_k,\varepsilon_\ell\in\{\pm1\},
\]
where $\{i,j\}$ and $\{k,\ell\}$ are edges of the
same color in the fixed $1$-factorization of the full graph on $\{1,...,6\}$.
The following argument shows that no choice of the two $48$-systems can yield
the quaternionic arrangement.

\begin{theorem}
No Clebsch-based $840$-point arrangement obtained from two $48$-systems and
the $720$ bridge vectors above is isometric to the quaternionic $840$-point
arrangement.
\end{theorem}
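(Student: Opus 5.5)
The plan is to find an isometry invariant of the Clebsch-based arrangements that does not depend on the two $48$-systems, and to show that $\XX_0$ fails it. The inner-product profile alone is not enough here, because the $48$-systems can be chosen freely and could introduce irrational inner products such as $\frac{2+\sqrt2}{8}$. I will use contact numbers instead (the number of points at inner product exactly $\frac12$) together with the coordinate frame they single out.

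First, in any Clebsch-based arrangement, consider a coordinate vector $e_1$. Its contacts among the bridge vectors are those with $\varepsilon_1=+1$ on an edge $\{1,j\}$. There are $5$ choices of $j$, $3$ partner edges of the same colour, and $8$ sign patterns, which gives $120$ contacts. Further contacts can come only from $48$-system vectors with $v_1=\frac12$. So each of the $24$ vectors $\pm e_r$ has at least $120$ contacts, and these vectors form $12$ mutually orthogonal lines.

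Second, I compute contact numbers in $\XX_0$. For an axial point $(c,0,0)\in\EE_1$ the count is as follows.
\begin{itemize}
\item Within $\EE_1$ there are $8$ contacts.
\item In $\MM_1$ each $a\in\DD$ occurs as a first component $16$ times, and $6$ elements of $\DD$ have inner product $\frac1{\sqrt2}$ with $c$, by Lemma~\ref{lem:coset-scalar-products}(iii). This gives $96$ contacts.
\item In $\MM_2$ a contact requires $a'=c$, which gives $16$ contacts.
\end{itemize}
The total is exactly $120$, and by symmetry the same holds for $\EE_2$ and $\EE_3$. For points of $\MM_1$ and $\MM_2$ I would use the case analysis of Lemmas~\ref{lem:within-mixed} and~\ref{lem:cross-mixed} to show that they have fewer than $120$ contacts. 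Equality $\frac12$ is attained only in restricted situations: one shared component of weight $\frac12$, one sign flip in a weight-$\frac14$ component, all-$\frac12$ cross-coset products, or two nonzero $\frac1{\sqrt2}$ products. Consistent with this, the average number of contacts is $2\cdot 32928/840\approx 78.4$. Hence any isometry would send the $12$ frame lines $\{\pm e_r\}$ to $12$ mutually orthogonal lines among the $36$ axial lines. A $24$-cell contains at most $4$ mutually orthogonal lines, and its orthogonal $4$-frames are exactly the three cosets $\CC_0,\CC_1,\CC_2$. So the frame must consist of one coset-frame from each of $\EE_1,\EE_2,\EE_3$. That gives $27$ candidates, which reduce to a few up to the obvious symmetries.

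Third, I would rule out each candidate frame with a rigid property of the Clebsch side. With respect to $\{e_r\}$ it has exactly $720$ points with four coordinates $\pm\frac12$ and the rest zero. It also has a splitting of the frame into two $6$-sets, each spanning a $6$-space that contains $60$ points of the arrangement. The second property looks easier to refute. The span of any $6$ frame vectors meets at least two quaternionic factors nontrivially without containing either of them. So I would count the points of $\XX_0$ lying in such a span, using that mixed points have nonzero projection onto all three factors and that axial points lie in a single factor. I expect this count to be far below $60$; for instance, a span containing $4+2$ frame vectors holds only the $24$-cell in the full factor plus the two lines from the other factor.

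The main obstacle is the second step: proving carefully that no mixed point reaches $120$ contacts. If that bound fails for some points, those points would also have to be admitted as frame candidates. The fallback is then to replace contact number by a finer local invariant, for example the inner-product profile of the contact set of a point, and compare it with that of $e_1$, where the bridge structure forces a specific pattern.
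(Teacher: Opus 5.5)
Your contact-number route differs from the paper's and can be completed. However, Step~2 contains a false claim, and correcting it finishes the proof without Step~3.

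\textbf{The error.} The points of $\EE_3$ do \emph{not} have $120$ contacts, because the construction is asymmetric in the third factor. For $(0,0,c)\in\EE_3$ the count is $8+16+0=24$:
\begin{itemize}
\item there are $8$ contacts inside $\EE_3$;
\item there are $16$ in $\MM_1$, namely the points whose third component equals $c/2$;
\item there are none in $\MM_2$, whose third components lie in $\tfrac12\DD$ and give inner products at most $\tfrac1{2\sqrt2}$.
\end{itemize}

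\textbf{The obstacle you flagged does hold.} A point of $\MM_1$ has $8+14+32+24=78$ contacts:
\begin{itemize}
\item $8$ axial contacts;
\item $2+12$ in its own $\Tau_{1,r}$, from one sign flip in a weight-$\tfrac14$ component or from sharing exactly the weight-$\tfrac12$ component;
\item $2\cdot16$ in the other two $\Tau_{1,s}$, where all three component products equal $+\tfrac12$, giving one sign pattern per element of $\BB_0$;
\item $3\cdot8$ in $\MM_2$.
\end{itemize}
A point of $\MM_2$ has $7+46+24=77$ contacts. As a check, $48\cdot120+24\cdot24+384\cdot78+384\cdot77=65856=2\cdot32928$.

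\textbf{How this closes the proof.} The points with at least $120$ contacts are exactly the $48$ points of $\EE_1\cup\EE_2$. They lie in the $8$-dimensional space $\HH\times\HH\times\{0\}$. That space cannot contain the $12$ mutually orthogonal images of the lines $\pm e_r$, so no isometry exists.

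\textbf{Step~3 can be dropped.} As sketched, it also contained an inaccurate claim: a $4+2+0$ span contains a whole factor. Its $2+2+2$ splits are the delicate case. They need a parity argument in the spirit of Lemma~\ref{lem:automatic-parity} to show they contain at most $44$ points. So Step~3 does go through, but it is unnecessary.

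\textbf{Comparison with the paper.} The paper uses the frame operator $S_X=\sum_x xx^{\top}$. The bridges contribute $60I_{12}$, and the two $60$-point blocks add block-diagonal positive semidefinite terms. Hence $\lambda_{\min}(S_X)\ge 60$ for every choice of $48$-systems. In contrast, $S_{\XX_0}=\operatorname{diag}(78I_4,78I_4,54I_4)$ has smallest eigenvalue $54$.

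Your corrected argument is a first-neighbour analogue of this second-moment argument. The bridges force $24$ points with at least $120$ contacts spanning all of $\RR^{12}$, while in $\XX_0$ such points span only two quaternionic factors. Both arguments detect the same feature, the underweighted third factor.

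The paper's version needs only the squared-coordinate distribution of each family and involves no case analysis. Yours requires the full contact count for the $768$ mixed points. In return it gives a combinatorial, local invariant that singles out the light factor directly.
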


\begin{proof}
For a configuration $X\subset\mathbb R^{12}$, consider the frame operator
\[
S_X=\sum_{x\in X}xx^{\top}.
\]
Its eigenvalues are invariant under orthogonal transformations and relabeling
of the points.

Let $X$ be any of Clebsch-based arrangements. We first compute the contribution of the $720$ bridge vectors to $S_X$.  Fix one of the
$12$ coordinates.  It occurs in exactly
\[
5\cdot3\cdot16=240
\]
bridge vectors: there are five edges of $K_6$ containing the corresponding
vertex, three edges in the matching of the same color in the other factor, and
$16$ sign choices.  Since the square of every nonzero bridge coordinate is
$\frac14$, the corresponding diagonal entry of the bridge frame operator is
\[
240\cdot\frac14=60.
\]
All off-diagonal entries vanish because the $16$ sign choices occur
symmetrically.  Hence the $720$ bridge vectors contribute $60I_{12}$ to $S_X$.

The two $60$-point blocks contribute positive semidefinite operators supported
on the first and second copies of $\mathbb R^6$, respectively.  Therefore the
frame operator of every Clebsch-based arrangement has the form
\[
60I_{12}+
\begin{pmatrix}
A&0\\
0&B
\end{pmatrix},
\]
where $A$ and $B$ are positive semidefinite $6\times6$ matrices.  In
particular,
\[
\lambda_{\min}(S_X)\ge 60.
\]

For the quaternionic arrangement, the frame operator can be read directly
from the construction.  Each of the three axial regular $24$-cells contributes
$6I_4$ on its quaternionic coordinate space.  In each $384$-point mixed
family, every quaternionic coordinate is uniformly distributed over a regular
$24$-cell: every point of that $24$-cell occurs $16$ times.  The independent
sign symmetry in the ternary relations makes the off-diagonal quaternionic
blocks vanish.  Since the squared coordinate weights are
\[
\left(\frac12,\frac14,\frac14\right)
\quad\text{and}\quad
\left(\frac14,\frac12,\frac14\right)
\]
for the two mixed families, their frame operators are respectively
\[
\operatorname{diag}(48I_4,24I_4,24I_4)
\quad\text{and}\quad
\operatorname{diag}(24I_4,48I_4,24I_4).
\]
After adding the three axial $24$-cells, the frame operator of the quaternionic
$840$-point arrangement is therefore
\[
\operatorname{diag}(78I_4,78I_4,54I_4).
\]
Thus, its smallest eigenvalue is $54$.  This is impossible
for a Clebsch-based arrangement, whose smallest eigenvalue is at least
$60$.  Hence the two configurations cannot be isometric.
\end{proof}

\end{document}
\section{Searching new spherical codes}
During the preparation of this paper, we also experimented with numerical searches for new spherical codes in dimensions $12$, $20$, and $28$. Our main numerical objective was based on minimizing a log-Riesz energy. Typically, the search was initialized by appending one additional vector to a known record configuration and then jointly optimizing the resulting code.

These experiments produced the following spherical codes:
\begin{enumerate}
\item $N=842$ points in $\mathbb R^{12}$, with maximal pairwise inner product approximately
$0.500$;
\item $N=19449$ points in $\mathbb R^{20}$, with maximal pairwise inner product approximately
$0.500$;
\item $N=204521$ points in $\mathbb R^{28}$, with maximal pairwise inner product approximately
$0.500$.
\end{enumerate}
In all three cases the maximal inner product is only slightly above the kissing threshold $1/2$, so these configurations may be viewed as near-kissing spherical codes with one more point than the corresponding configurations from which the searches were initialized.

Coordinate files for these configurations are available in the accompanying \href{https://github.com/k-nic/quaternionic}{Github} repository.